\title{Free colimit completion in $\infty$-categories}
\author{Charles Rezk}
\date{ \today}
\address{Department of Mathematics \\
University of Illinois Urbana-Champaign \\ 
Urbana, IL}
\email{rezk@illinois.edu}
\numberwithin{equation}{section}
  \let\c@subsection\c@equation
\theoremstyle{plain}   
\newtheorem{thm}[subsection]{Theorem}
\newtheorem*{thm*}{Theorem}
\newtheorem{prop}[subsection]{Proposition}
\newtheorem*{prop*}{Proposition}
\newtheorem{cor}[subsection]{Corollary}
\newtheorem*{cor*}{Corollary}
\newtheorem{lemma}[subsection]{Lemma}
\newtheorem*{lemma*}{Lemma}
\newtheorem*{claim*}{Claim}
\theoremstyle{remark}
\newtheorem{rem}[subsection]{Remark}    
\newtheorem*{rem*}{Remark}
\newtheorem{exam}[subsection]{Example}
\newtheorem*{exam*}{Example}
\theoremstyle{plain}
\begin{document}


\newcommand{\margnote}[1]{\mbox{}\marginpar{\tiny\hspace{0pt}#1}}

\def\lambada{\lambda}


\newcommand{\id}{\operatorname{id}}
\newcommand{\colim}{\operatorname{colim}}
\newcommand{\llim}{\operatorname{lim}}
\newcommand{\laxlim}{\operatorname{laxlim}}
\newcommand{\oplaxlim}{\operatorname{oplaxlim}}
\newcommand{\Cok}{\operatorname{Cok}}
\newcommand{\Ker}{\operatorname{Ker}}
\newcommand{\Image}{\operatorname{Im}}
\newcommand{\op}{{\operatorname{op}}}
\newcommand{\Aut}{{\operatorname{Aut}}}
\newcommand{\End}{{\operatorname{End}}}
\newcommand{\Hom}{{\operatorname{Hom}}}

\newcommand*{\ra}{\rightarrow}
\newcommand*{\lra}{\longrightarrow}
\newcommand*{\xra}{\xrightarrow}
\newcommand*{\la}{\leftarrow}
\newcommand*{\lla}{\longleftarrow}
\newcommand*{\xla}{\xleftarrow}

\newcommand{\ho}{\operatorname{ho}}
\newcommand{\hocolim}{\operatorname{hocolim}}
\newcommand{\holim}{\operatorname{holim}}

\newcommand*{\realiz}[1]{\left\lvert#1\right\rvert}
\newcommand*{\len}[1]{\left\lvert#1\right\rvert}
\newcommand{\set}[2]{{\{\,#1\,\mid\,#2\,\}}}
\newcommand{\bigset}[2]{\left\{\,#1\;\middle|\;#2\,\right\}}
\newcommand*{\tensor}[1]{\underset{#1}{\otimes}}
\newcommand*{\pullback}[1]{\underset{#1}{\times}}
\newcommand*{\powser}[1]{[\![#1]\!]}
\newcommand*{\laurser}[1]{(\!(#1)\!)}
\newcommand{\ndiv}{\not|}
\newcommand{\pairing}[2]{\langle#1,#2\rangle}

\newcommand{\lrtensor}[3]{{\mathstrut}^{#1}\!\otimes_{#2}\!{\mathstrut}^{#3}}
\newcommand{\ltensor}[2]{{\mathstrut}^{#1}\!\otimes_{#2}}
\newcommand{\rtensor}[2]{\otimes_{#1}\!{\mathstrut}^{#2}}

\newcommand{\F}{\mathbb{F}}
\newcommand{\Z}{\mathbb{Z}}
\newcommand{\N}{\mathbb{N}}
\newcommand{\R}{\mathbb{R}}
\newcommand{\Q}{\mathbb{Q}}
\newcommand{\C}{\mathbb{C}}

\newcommand{\point}{{\operatorname{pt}}}
\newcommand{\Map}{\operatorname{Map}}
\newcommand{\eev}{\wedge}
\newcommand{\sm}{\wedge} 

\newcommand*{\mc}{\mathcal}
\newcommand*{\msc}{\mathscr}
\newcommand*{\mf}{\mathfrak}
\newcommand*{\mr}{\mathrm}
\newcommand*{\mb}{\mathbb}
\newcommand*{\ul}{\underline}
\newcommand*{\ol}{\overline}
\newcommand*{\wt}{\widetilde}
\newcommand*{\wh}{\widehat}
\newcommand*{\mtt}{\mathtt}
\newcommand*{\ms}{\mathsf}
\newcommand*{\mbf}{\mathbf}

\newcommand{\dfn}{\textbf}

\def\noloc{\;{:}\,}

\newcommand{\defeq}{\colonequals}

\newcommand{\forcepar}{\mbox{}\par}

\newcommand{\icat}{\mr{Cat}_\infty}
\newcommand{\igpd}{\msc{S}}

\newcommand{\Psh}{\operatorname{PSh}}
\def\PSh{\Psh}
\newcommand{\Fun}{\operatorname{Fun}}
\newcommand{\Ind}{\operatorname{Ind}}

\newcommand{\Idem}{\mr{Idem}}

\newcommand{\Set}{\mr{Set}}

\newcommand{\Filt}{\operatorname{Filt}}
\newcommand{\coFilt}{\operatorname{coFilt}}
\newcommand{\Sm}{\mr{Sm}}

\newcommand{\LPath}{\operatorname{LPath}}
\newcommand{\Path}{\operatorname{Path}}

\begin{abstract}
We show how several useful properties of $\mathrm{Ind}$-constructions in
$\infty$-categories extend to arbitrary free colimit completion
constructions.  In doing so we discover the useful notion of a
\emph{regular class} of $\infty$-categories.
\end{abstract}

\maketitle


\section{Introduction}

It is well-known that the $\infty$-category $\Psh(C)$ of presheaves of
$\infty$-groupoids on $C$ is the ``free colimit completion''
of $C$.  
  More generally, there is a ``free $\mc{F}$-colimit
completion'' $\Psh^{\mc{F}}(C)$ for a given class $\mc{F}$ of
$\infty$-categories, which 
can be exhibited as the full subcategory of $\PSh(C)$ generated by
representable presheaves under 
$\mc{F}$-colimits, as described
by Lurie in \cite{lurie-higher-topos}*{5.3.6}.  Note that ``free''
here means we are not merely adjoining some colimits, but rather that the
construction is characterized by a universal property.

In special cases we have more.  For instance, when $\mc{F}$ is the
class of \emph{$\kappa$-filtered} $\infty$-categories for some regular
cardinal $\kappa$, then the free $\mc{F}$-colimit completion
admits a rather more explicit description: it is  $\Ind_\kappa(C)$,
the full subcategory of presheaves $X$ on $C$ which represent a right
fibration $C/X\ra C$ such that $C/X$ is $\kappa$-filtered.  Here $C/X$
is the ``point category'' (or ``category of elements'') of the functor
$X\colon C^\op\ra \igpd$. 

Furthermore, there is a very useful ``recognition principle'' for such
categories: any $\infty$-category $A$ which is generated under
$\kappa$-filtered colimits by a full subcategory $C\subseteq A$ of
``$\kappa$-compact objects'' is canonically equivalent to
$\Ind_\kappa(C)$ \cite{lurie-higher-topos}*{5.3.5}.  These
$\Ind$-categories are the basis of the theory of accessible 
$\infty$-categories.

This note addresses the question: to what extent can these pleasant
properties of $\Ind_\kappa(C)$ be extended to arbitrary free colimit
completions?  The answer is: in some sense, pretty much all of them.
Our results are encapsulated by the following observation.
\begin{quote}
  \emph{Whether a presheaf $X$ is in $\Psh^{\mc{F}}(C)$ depends only on its
   point category $C/X$.}
\end{quote}
This in turn leads to the useful notion of the \emph{regular closure}
of $\mc{F}$, which is the collection of all categories which can
appear as $C/X$ for some $C$ and some $X\in \PSh^{\mc{F}}(C)$.  

Here is a brief summary of our results.
\begin{itemize}
\item  Any class $\mc{F}$ of small $\infty$-categories can be enlarged
  to a  \emph{regular closure} $\ol{\mc{F}}$, which consists of
 $C$ whose free $\mc{F}$-colimit completion  $\Psh^{\mc{F}}(C)$
 contains a terminal presheaf 
  (\S\ref{sec:regular-closure}).    We say that 
  $\mc{F}$ is a \emph{regular class} when $\mc{F}=\ol{\mc{F}}$
  (\S\ref{sec:regular-class}). 
\item We get an explicit criterion for describing the free
  $\mc{F}$-colimit completion $\Psh^{\mc{F}}(C)\subseteq \Psh(C)$ as a
  full subcategory, much as for $\Ind_\kappa(C)$: a presheaf $X$ is in
  $\Psh^{\mc{F}}(C)$ if and only if $C/X$ is in the regular closure
  of $\mc{F}$, where $C/X\ra C$ is the right fibration classified by
  $X$ \eqref{prop:regular-criterion-for-colim-closure}. 
\item Thus free $\mc{F}$-colimit completion  depends only on the
  regular closure $\ol{\mc{F}}$, and in fact the regular
  classes precisely correspond to  possible ``types'' of free colimit
  completion
  \eqref{prop:reg-closures-corr-to-free-colimit-completions}. 
\item We find that any $\infty$-category which has all
  $\mc{F}$-colimits also has all $\ol{\mc{F}}$-colimits, and any functor
  between such which preserves all $\mc{F}$-colimits also preserves all
  $\ol{\mc{F}}$-colimits \eqref{prop:have-and-preserve-colims-regular}.
\item Furthermore, 
  the previous is in   some sense the best possible: $\ol{\mc{F}}$ is
  the largest class for which we can have such a collection of results
  \eqref{prop:have-and-preserve-colims-regular}.
\item The $\infty$-groupoids in a regular class $\ol{\mc{F}}$, as well as the
  underlying weak homotopy types of objects of $\ol{\mc{F}}$, are precisely
  those in the full subcategory of $\infty$-groupoids generated by the
  terminal object under $\mc{F}$-colimits
  \eqref{prop:regular-class-infty-gpd}. 

\item There is a recognition principle for free $\mc{F}$-colimit completion
   generalizing that for $\Ind$-categories, which is
  stated in terms of the evident notion of \emph{$\mc{F}$-compact
    object}.
  \eqref{prop:recognition-free-colimit-completion}.  
\item It is straightforward to produce examples of regular classes,
  which include the familiar classes of \emph{$\kappa$-filtered} and
  \emph{sifted} $\infty$-categories, but also many others which have
  not been much studied (\S\ref{sec:cut-out-reg-classes},
  \S\ref{sec:examples}). 
\end{itemize}
Given all this, it would be very desirable to have methods for calculating
regular closures of various classes of interest.    Further study is
needed!

I came to this while working on a project to understand
generalizations of $\Ind$-constructions and accessible
$\infty$-categories, motivated by the 1-categorical work of
\cite{adamek-borceux-lack-rosicky-classification-acc-cat}.  The idea
is to look at classes $\mc{F}$ of $\infty$-categories  characterized
by how $\mc{F}$-colimits of $\infty$-groupoids preserve a fixed
collection of types of limit \cite{rezk-gen-accessible-infty-cats},
e.g., much as $\kappa$-filtered colimits preserve $\kappa$-small limits of
$\infty$-groupoids, or sifted colimits preserve finite products of
$\infty$-groupoids.   Such classes, called \emph{filtering classes},
are defined and briefly discussed here in \eqref{subsec:filtration-and-cofiltration}
In the course of this I realized that for many
purposes there is nothing special about classes described in terms of
such limit preservation.   It is fair to say that none of the results
here are particularly deep, and that some of these results have been
surely noticed by others.  However, the picture they make is pleasant and
perhaps surprising, so it seems worthwhile to lay out the story in detail.
I note that there is every reason to expect that
most of  the $\infty$-categorical results described here  have
1-categorical analogues.  However, I have not attempted to trace this
out explicitly.

Here is a brief outline of the paper.  After reviewing basic facts about presheaf
categories and their role as free colimit completions
(\S\ref{sec:basic-notions} and \S\ref{sec:free-colimit-completion}),
we introduce the notions of regular closure
\S\ref{sec:regular-closure} and regular class \S\ref{sec:regular-class}
and show how they precisely
answers certain questions about existence and/or preservation of
colimits.  In \S\ref{sec:explicit-description-free-colimit} we use
these to give an explicit description of free 
colimit completions, and in particular note that forming free
completions is compatible with forming
slices \eqref{cor:colim-closure-slice}.  We review the relation
between regular closure and cofinal functors in
\S\ref{sec:cofinality}, the relation between regular classes and
$\infty$-groupoids in \S\ref{sec:reg-class-infty-gpd}, and a
general way to find regular classes in
\S\ref{sec:cut-out-reg-classes}.  The recognition principal for free
colimit completion (generalizing the one for $\Ind$-categories) is
proved in \S\ref{sec:recognition-principle}.  We give a number of
(mostly standard) examples of these phenomena in \S\ref{sec:examples}.
The final section \S\ref{sec:functors-preserving-colimits} is an
appendix, proving a criterion which reduces preservation of colimits
by functors to stability of full subcategories under colimits, whose
proof was described to me by Maxime Ramzi.

Thanks to Maxime Ramzi and Sil Linksens for help with
(\S\ref{sec:functors-preserving-colimits}), and others on the
algebraic topology Discord server who answered my queries, including
Tim Campion and Dylan Wilson.

\section{Basic $\infty$-categorical notions}
\label{sec:basic-notions}

\subsection{Universes}

We work with respect to a chosen \dfn{universe} of small simplicial
sets, which determines an $\infty$-category $\icat$ of \dfn{small
$\infty$-categories}, together with a full subcategory $\igpd\subseteq
\icat$ of \dfn{small $\infty$-groupoids}.  We say that an
$\infty$-category is 
\dfn{locally small} if its mapping spaces are equivalent to small
$\infty$-groupoids.

We also discuss $\infty$-categories which are
not small.  These may be imagined to live in some higher universe, but
I will not need to refer explicitly to a hierarchy of 
universes as in \cite{lurie-higher-topos}*{1.2.15}.  However, some of
the results I use do rely on universe-hopping, most
notably Lurie's construction of free colimit completions
\eqref{thm:psh-f-free-colimit}, and his related embedding 
theorem \eqref{thm:colimit-embedding}. 

\subsection{Colimits}
\label{subsec:colimits}

By a \dfn{small colimit}, I mean a colimit of a functor $J\ra A$ where
$J$ is a small $\infty$-category.  
I say that an $\infty$-category $A$ is \dfn{cocomplete} if it has all
\emph{small} colimits, and \dfn{complete} if it has all \emph{small} limits.

More generally given a class $\mc{F}$ of $\infty$-categories I will
speak of \dfn{$\mc{F}$-colimits}, i.e., colimits of functors $J\ra A$
where $J\in \mc{F}$.  Thus, I can speak of an $\infty$-category $A$ which
\dfn{has $\mc{F}$-colimits}, i.e., is such that every $J\ra A$ with
$J\in \mc{F}$ admits a colimit.

Given a fully faithful functor $f\colon A\ra B$ (e.g., the inclusion
of a full subcategory), I say that $f$ is \dfn{stable under
  $\mc{F}$-colimits} if (i)  $A$ has $\mc{F}$-colimits
and (ii) $f$ preserves all $\mc{F}$-colimits in $A$.  If $A\subseteq B$ is an
inclusion of a full subcategory, I'll just say that $A$ is stable in $B$
under $\mc{F}$-colimits.

Finally, given a full subcategory $A\subseteq B$ of an
$\infty$-category $B$ which has $\mc{F}$-colimits, the subcategory
\dfn{generated by $A$ under $\mc{F}$-colimits} is the smallest full
subcategory $A'\subseteq B$ containing $A$ which is stable under
$\mc{F}$-colimits.

\subsection{Presheaves}
\label{subsec:presheaves}

Given a small $\infty$-category $C$, we write $\Psh(C)\defeq
\Fun(C^\op,\igpd)$ for the category of \dfn{presheaves} of
$\infty$-groupoids on $C$ (rather than Lurie's notation
$\msc{P}(C)$ of \cite{lurie-higher-topos}*{5.1}).  I denote the Yoneda
functor by $\rho_C\colon 
C\ra \Psh(C)$, or just $\rho$ if the context is clear.  Recall that
$\rho$ is fully faithful and that $\Psh(C)$ is complete and
cocomplete.

\subsection{Slices of presheaves}
\label{subsec:slices-of-presheaves}

Given a presheaf $X\in \Psh(C)$, I write
\[
C/X \defeq C\times_{\Psh(C)} \Psh(C)_{/X}
\]
for the evident pullback of the slice projection $\Psh(C)_{/X}\ra
\Psh(C)$ along $\rho$, and $\pi_X\colon C/X\ra C$ for the evident
projection.  The composite $\rho_C\pi_X\colon C/X\ra \Psh(C)$ comes
with an extension to a colimit functor $\wt\rho\colon (C/X)^\rhd\ra
\Psh(C)_{/X}$, which 
exhibits $X$ \emph{tautologically} as a colimit of $\rho\pi_X$
\cite{lurie-higher-topos}*{5.1.5.3}.

In particular, the colimit of $\rho\colon C\ra \Psh(C)$ is a terminal
presheaf, since $C/1\approx C$.

\begin{rem} $C/X$ may be regarded as
an ``$\infty$-category of elements'' or \dfn{point category} of $X$, by
analogy with the 
1-categorical analogue.  The projection $\pi_X\colon C/X\ra C$ is a
right fibration, representing the unstraightening of the functor
$X\colon C^\op\ra \igpd$.
\end{rem}

The evident functor $C/X\ra \Psh(C)_{/X}$ induces by
restriction an equivalence
\[
\kappa\colon \Psh(C)_{/X}\xra{\sim} \Psh(C/X),
\]
i.e., every slice of a presheaf category is a presheaf category on a
category of elements \cite{lurie-higher-topos}*{5.1.6.12}.  Under this
equivalence, the
forgetful functor $\Psh(C)_{/X}\ra \Psh(C)$ corresponds to a functor
denoted  $\wh{\pi_X}\colon \Psh(C/X)\ra \Psh(C)$, which is necessarily
colimit preserving and which comes with a natural isomorphism
$\wh{\pi_X} \circ \rho_{C/X}\approx \rho_C$ of functors $C/X\ra \Psh(C)$.

Finally, note that if $X\in \Psh(C)$ and $\wt{Y}\defeq (f\colon Y\ra X)\in
\Psh(C)_{/X}$, then we have an equivalence
\[
(\Psh(C)_{/X})_{/\wt{Y}} \approx \Psh(C)_{/Y},
\]
which when combined with the equivalence $\kappa\colon
\Psh(C)_{/X}\approx \Psh(C)$ restricts to an equivalence of full subcategories
\[
(C/X)/\wt{Y} \approx C/Y.
\]

\section{Free colimit completion}
\label{sec:free-colimit-completion}

The Yoneda functor $\rho\colon C\ra \Psh(C)$ exhibits the \dfn{free
  colimit completion} of $C$.
\begin{thm}\label{thm:psh-free-colimit}\cite{lurie-higher-topos}*{5.1.5.6}
  For any cocomplete $\infty$-category $A$, restriction along $\rho$
  induces an equivalence
  \[
  \Fun(\Psh(C),A)\supseteq \Fun^{\mr{colim}}(\PSh(C),A) \ra \Fun(C,A)
\]
from the category of colimit preserving functors $\PSh(C)\ra A$ to the
category of functors $C\ra A$.

In particular, any functor $f\colon
C\ra A$ admits an essentially unique extension $\wh{f}\colon
\PSh(C)\ra A$ to a colimit preserving functor equipped with a natural
isomorphism $\wh{f}\rho\approx f$.
\end{thm}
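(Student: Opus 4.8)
The plan is to exhibit the restriction functor $\rho^{*}\colon \Fun^{\mr{colim}}(\Psh(C),A)\to \Fun(C,A)$, $F\mapsto F\rho$, as one half of an equivalence whose inverse is left Kan extension along $\rho$. Since $A$ is cocomplete and each point category $C/X$ is small (it is the total space of the right fibration $\pi_X$, with small fibers $X(c)$ over the small category $C$), the pointwise left Kan extension $\operatorname{Lan}_\rho f$ of any $f\colon C\to A$ exists, with
\[
(\operatorname{Lan}_\rho f)(X)\approx \colim_{C/X} f\pi_X .
\]
First I would invoke the formal adjunction $\operatorname{Lan}_\rho\dashv\rho^{*}$ of functors $\Fun(C,A)\rightleftarrows\Fun(\Psh(C),A)$ (valid precisely because these pointwise Kan extensions exist), and then show separately that $\operatorname{Lan}_\rho$ lands in colimit-preserving functors and that its essential image is exactly $\Fun^{\mr{colim}}(\Psh(C),A)$.

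Two of the three facts needed are comparatively soft. That the unit $f\to \rho^{*}\operatorname{Lan}_\rho f$ is an equivalence follows from full faithfulness of $\rho$: the comma category computing $(\operatorname{Lan}_\rho f)(\rho c)$ is $C\times_{\Psh(C)}\Psh(C)_{/\rho c}\approx C_{/c}$, which has the terminal object $(c,\id_c)$, so the defining colimit collapses to $f(c)$. This makes $\operatorname{Lan}_\rho$ fully faithful. Conversely, if $F$ already preserves colimits, then the tautological colimit presentation $X\approx\colim_{C/X}\rho\pi_X$ of \S\ref{subsec:slices-of-presheaves} yields $F(X)\approx\colim_{C/X}F\rho\pi_X\approx(\operatorname{Lan}_\rho(F\rho))(X)$, naturally in $X$; hence every colimit-preserving functor lies in the essential image of $\operatorname{Lan}_\rho$.

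The hard part will be the remaining claim: that $\operatorname{Lan}_\rho f$ \emph{itself} preserves all small colimits, so that $\operatorname{Lan}_\rho$ genuinely takes values in $\Fun^{\mr{colim}}$. I would attack this by rewriting the pointwise formula as a weighted colimit; using the standard identification of a colimit over a category of elements with a coend,
\[
(\operatorname{Lan}_\rho f)(X)\approx \int^{c\in C} X(c)\otimes f(c),
\]
where $X(c)\otimes f(c)$ denotes the copower of $f(c)\in A$ by the $\infty$-groupoid $X(c)$. Regarded as a functor of the weight $X$, this preserves colimits: colimits in $\Psh(C)=\Fun(C^\op,\igpd)$ are computed objectwise, so $X\mapsto X(c)$ is cocontinuous; the copower $(-)\otimes f(c)\colon\igpd\to A$ is cocontinuous, being left adjoint to $\Map_A(f(c),-)$ (this is where cocompleteness of $A$ enters); and the coend, being itself a colimit, commutes with colimits in the weight. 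The one technical point deserving care is a rigorous treatment of this coend manipulation in the $\infty$-categorical setting, which I would either cite or reduce to the universal property of copowers together with the objectwise computation of colimits of presheaves.

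Finally I would assemble the pieces. Having shown $\operatorname{Lan}_\rho\colon \Fun(C,A)\to \Fun(\Psh(C),A)$ is fully faithful with essential image exactly $\Fun^{\mr{colim}}(\Psh(C),A)$, it follows that $\operatorname{Lan}_\rho$ and $\rho^{*}$ restrict to mutually inverse equivalences $\Fun(C,A)\approx \Fun^{\mr{colim}}(\Psh(C),A)$. The ``in particular'' clause is then just this equivalence read on objects, with $\wh f\defeq\operatorname{Lan}_\rho f$ and the natural isomorphism $\wh f\rho\approx f$ supplied by the unit.
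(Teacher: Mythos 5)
The paper does not actually prove this theorem: it is quoted verbatim from Lurie, with \cite{lurie-higher-topos}*{5.1.5.6} serving as the proof, so the only meaningful comparison is with Lurie's argument. Your skeleton is in fact the same as his: extend $f$ by left Kan extension along $\rho$ (existence guaranteed by smallness of the comma categories $C/X$ and cocompleteness of $A$), get full faithfulness of $\operatorname{Lan}_\rho$ from full faithfulness of $\rho$ (the comma category over $\rho c$ is $C_{/c}$, which has a terminal object), and identify the essential image with the colimit-preserving functors via the tautological presentation $X\approx\colim_{C/X}\rho\pi_X$; this is precisely the structure of Lurie's proof, which combines his Kan-extension theorem (HTT 4.3.2.15, plus 4.3.3.7 for the adjunction you invoke) with his Lemma 5.1.5.5 identifying the Kan-extended functors with the colimit-preserving ones. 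Where you genuinely diverge is in the one hard step, that $\operatorname{Lan}_\rho f$ preserves all small colimits: Lurie proves this inside his framework using the cofinality and colimit-decomposition technology of HTT Chapter 4, whereas you use the coend formula $\int^{c}X(c)\otimes f(c)$ and cocontinuity in the weight. Your argument is sound once that formula is available --- the assignment sending $X$ to the associated $\operatorname{Tw}(C)$-indexed diagram is cocontinuous objectwise, and $\colim_{\operatorname{Tw}(C)}$ is a left adjoint --- but, as you correctly flag, identifying the pointwise Kan extension with that coend is itself a nontrivial theorem in the $\infty$-categorical setting (it is not a transcription of the $1$-categorical Fubini manipulation) and must be cited, e.g.\ from Gepner--Haugseng--Nikolaus \cite{gepner-haugseng-nikolaus-lax}. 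If you want to avoid that dependency entirely, there is a cleaner route to the same step: show $\operatorname{Lan}_\rho f$ is a left adjoint, with right adjoint the restricted-Yoneda functor $a\mapsto \Map_A(f(-),a)$; the adjunction equivalence follows directly from the pointwise colimit formula, and left adjoints preserve colimits. With either repair of that step, your proposal is a correct proof.
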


As a consequence, $\Psh(C)$ contains the \dfn{universal $C$-colimit},
which is just the terminal presheaf.
\begin{cor}\label{cor:universal-colim}
  Let $f\colon C\ra A$ be any functor from a small $\infty$-category
  to a cocomplete $\infty$-category.  Then the colimit of $f$ in $A$
  is equivalent to $\wh{f}(1)$, where $\wh{f}\colon \Psh(C)\ra A$ is
  any colimit preserving extension of $f$ along $\rho$.
\end{cor}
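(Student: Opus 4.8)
The plan is to recognize the terminal presheaf as itself a colimit built from representables, and then push that colimit through the colimit-preserving extension $\wh{f}$. All the serious content has already been assembled, so the argument is essentially a one-line deduction.

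First I would recall, from the discussion of slices in \S\ref{subsec:slices-of-presheaves}, that the terminal presheaf $1$ is canonically the colimit of the Yoneda functor. Indeed, the tautological colimit exhibits any presheaf $X$ as $\colim(\rho\pi_X)$ for $\pi_X\colon C/X\ra C$; specializing to the terminal object $X=1$, where $C/1\approx C$ and $\pi_1$ is an equivalence, this reads $1\approx \colim_C\rho$. This is precisely the observation already made immediately after the tautological colimit statement.

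Next, since $\wh{f}\colon \Psh(C)\ra A$ is colimit preserving by Theorem~\ref{thm:psh-free-colimit}, I would apply it to the equivalence above and commute it past the colimit to obtain
\[
\wh{f}(1)\;\approx\;\wh{f}\bigl(\colim_C \rho\bigr)\;\approx\;\colim_C\bigl(\wh{f}\rho\bigr)\;\approx\;\colim_C f,
\]
where the last step uses the natural equivalence $\wh{f}\rho\approx f$ supplied by the same theorem. Note that $\colim_C f$ is guaranteed to exist because $A$ is cocomplete and $C$ is small.

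In truth there is no real obstacle here: all the work is already packaged into Theorem~\ref{thm:psh-free-colimit} (existence of a colimit-preserving extension) and into the tautological-colimit fact (identifying $1$ with $\colim_C\rho$). The only point requiring any care is the first step, making sure the terminal presheaf genuinely \emph{is} the colimit of $\rho$ rather than merely receiving a map from it; once that identification is in hand, the conclusion is immediate from colimit-preservation of $\wh{f}$.
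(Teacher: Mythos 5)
Your proof is correct and matches the paper's own argument, which is exactly the same two-step deduction: $1\approx\colim_C\rho$ (from the tautological colimit and $C/1\approx C$) combined with the fact that $\wh{f}$ preserves colimits and satisfies $\wh{f}\rho\approx f$. Nothing further to add.
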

\begin{proof}
  Since $\wh{f}\colon \Psh(C)\ra A$ preserves colimits, and $1\approx
  \colim_C \rho$.
\end{proof}

Let $\mc{F}\subseteq \icat$ be a class of small $\infty$-categories.
Given a small $\infty$-category $C$, let 
$\Psh^{\mc{F}}(C)\subseteq \Psh(C)$ denote the full subcategory
generated by representable presheaves under
  $\mc{F}$-colimits. That is, $\Psh^{\mc{F}}(C)$ is the smallest full
subcategory of presheaves which (i) contains the image of the Yoneda
functor $\rho\colon C\ra \Psh(C)$ and (ii) is stable under
$\mc{F}$-colimits.  The restriction $\rho\colon C\ra \Psh^{\mc{F}}(C)$
exhibits the \dfn{free $\mc{F}$-colimit completion} of $C$.

\begin{thm}\cite{lurie-higher-topos}*{5.3.6.2}\label{thm:psh-f-free-colimit}
If 
  $\mc{F}\subseteq \icat$ is a class of small $\infty$-categories, and
  if $A$ is an
  $\infty$-category which has $\mc{F}$-colimits, then restriction
  along $\rho$ exhibits  an equivalence
  \[
  \Fun(\Psh^{\mc{F}}(C),A)\supseteq
  \Fun^{\mc{F}\mr{-colim}}(\Psh^{\mc{F}}(C), A)\ra \Fun(C, A) 
\]
from the category of $\mc{F}$-colimit preserving functors
$\Psh^{\mc{F}}(C)\ra A$ to $\Fun(C,A)$.

In particular, any functor $f\colon C\ra A$ admits an essentially
unique extension $\wh{f}\colon \Psh^{\mc{F}}(C)\ra A$ to an
$\mc{F}$-colimit preserving functor equipped with a natural
isomorphism $\wh{f}\rho\approx f$.
\end{thm}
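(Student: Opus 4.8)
The plan is to bootstrap from the cocomplete case \eqref{thm:psh-free-colimit} to the case of a target $A$ that merely has $\mc{F}$-colimits, and to treat essential surjectivity and fully faithfulness of the restriction-along-$\rho$ functor $R\colon \Fun^{\mc{F}\mr{-colim}}(\Psh^{\mc{F}}(C),A)\to \Fun(C,A)$ separately.

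For fully faithfulness, I would isolate the following \emph{detection principle}, valid for any $A$ with $\mc{F}$-colimits: if $G,H\colon \mc{E}\to A$ both preserve $\mc{F}$-colimits and $\mc{E}$ is generated under $\mc{F}$-colimits by a full subcategory $\mc{E}_0$, then restriction induces an equivalence $\Map(G,H)\xra{\sim}\Map(G|_{\mc{E}_0},H|_{\mc{E}_0})$ on natural transformation spaces. The idea is a closure argument driven by naturality: if $X\approx \colim_{j\in J}X_j$ is an $\mc{F}$-colimit of objects for which a transformation is already determined, then since $GX\approx\colim_j GX_j$ the component $\alpha_X\colon GX\to HX$ is forced by the universal property of the colimit to be the map induced by the legs $GX_j\xrightarrow{\alpha_{X_j}}HX_j\to HX$; running this over the generation of $\mc{E}$ from $\mc{E}_0$ shows that the locus where restriction is an equivalence is stable under $\mc{F}$-colimits and contains $\mc{E}_0$, hence is all of $\mc{E}$. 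Applying this with $\mc{E}=\Psh^{\mc{F}}(C)$ and $\mc{E}_0=\rho(C)$ yields fully faithfulness of $R$.

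For essential surjectivity I would invoke Lurie's embedding theorem \eqref{thm:colimit-embedding} to choose a fully faithful, $\mc{F}$-colimit-preserving functor $i\colon A\hookrightarrow \wh{A}$ with $\wh{A}$ cocomplete. Given $f\colon C\to A$, the composite $if$ extends by \eqref{thm:psh-free-colimit} to a colimit-preserving $\wh{if}\colon \Psh(C)\to\wh{A}$, whose restriction to $\Psh^{\mc{F}}(C)$ preserves $\mc{F}$-colimits because $\Psh^{\mc{F}}(C)$ is stable under them in $\Psh(C)$. The key point is that this restriction lands in the essential image of $i$: since $i$ preserves $\mc{F}$-colimits and $A$ has them, that essential image is stable under $\mc{F}$-colimits in $\wh{A}$, and it contains $if(C)$, so the full subcategory of $X\in\Psh^{\mc{F}}(C)$ with $\wh{if}(X)$ in the image of $i$ contains the representables and is stable under $\mc{F}$-colimits, hence is everything. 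Corestricting along the equivalence $A\xra{\sim}\mr{im}(i)$ then produces the extension $\wh{f}\colon \Psh^{\mc{F}}(C)\to A$; it preserves $\mc{F}$-colimits because $i$ reflects them, and $\wh{f}\rho\approx f$ because $i$ is fully faithful.

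The main obstacle is the step that genuinely leaves the cocomplete world: producing the embedding $i$ of \eqref{thm:colimit-embedding}, which is the one place universe-hopping is unavoidable, together with the coherence needed to upgrade the detection principle from determining a transformation to \emph{constructing} one as an honest map of spaces in the $\infty$-categorical setting. This is the same homotopy-coherence bookkeeping that underlies \eqref{thm:psh-free-colimit} itself, so I would either import it from there or, to keep the closure argument self-contained, run it inside the arrow category $\Fun(\Delta^1,A)$, whose $\mc{F}$-colimits are computed pointwise.
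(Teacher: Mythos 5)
The paper does not actually prove this theorem: it defers entirely to Lurie \cite{lurie-higher-topos}*{5.3.6.2}, noting only that Lurie's proof is intertwined with the embedding theorem \eqref{thm:colimit-embedding}. Measured against that cited proof, your essential-surjectivity half is correct and is essentially the same strategy: embed $A$ into a cocomplete $\wh{A}$ via \eqref{thm:colimit-embedding}, extend $if$ over $\Psh(C)$ via \eqref{thm:psh-free-colimit}, and run a generation argument to see that the restriction to $\Psh^{\mc{F}}(C)$ lands in the essential image of $i$; each step there is sound (the essential image of $i$ is indeed stable under $\mc{F}$-colimits because $A$ has them and $i$ preserves them, and fully faithful functors do reflect colimits). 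The genuine gap is in your fully-faithfulness argument. The ``locus where restriction is an equivalence'' is not a property of individual objects of $\Psh^{\mc{F}}(C)$ --- it is a property of a pair of functors together with a full subcategory --- so the generation argument (``contains the representables, stable under $\mc{F}$-colimits, hence everything'') does not even apply to it as stated. The objectwise property you implicitly need is ``$G$ is a left Kan extension of $G\rho$ at $X$,'' i.e.\ that the canonical map $\colim_{C/X}G\rho\pi_X\to G(X)$ is an equivalence; but stability of \emph{that} property under $\mc{F}$-colimits is not the triviality ``the component $\alpha_X$ is forced by the colimit.'' It requires rewriting a $C/X$-indexed colimit as a $J$-indexed colimit of $C/X_j$-indexed ones when $X\approx\colim_J X_j$, and $C/X$ itself need not lie in $\mc{F}$. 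This decomposition-of-colimits step is precisely where the substance of Lurie's proof lives, and your sketch skips it.

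Neither of your two proposed repairs closes this hole. Importing uniqueness from \eqref{thm:psh-free-colimit} is circular: that theorem controls colimit-preserving functors defined on all of $\Psh(C)$, and identifying the restriction of such an extension to $\Psh^{\mc{F}}(C)$ with a \emph{given} $\mc{F}$-colimit-preserving functor agreeing with it on representables is exactly the uniqueness statement you are trying to prove. Likewise, the arrow-category trick combined with your essential-surjectivity result (applied to the target $\Fun(\Delta^1,A)$, whose $\mc{F}$-colimits are pointwise) only extends a given $\alpha_0\colon G\rho\to H\rho$ to a transformation $G'\to H'$ between \emph{some} $\mc{F}$-colimit-preserving extensions $G',H'$ of $G\rho,H\rho$; identifying $G'\approx G$ and $H'\approx H$ is again the missing uniqueness, and in any case this gives at best fullness, not the required equivalence of mapping spaces. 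A non-circular fix is available inside this paper: \eqref{prop:have-and-preserve-colims-regular} and \eqref{prop:regular-criterion-for-colim-closure} are proved without appeal to the present theorem, and together they show that any $\mc{F}$-colimit-preserving $G\colon\Psh^{\mc{F}}(C)\to A$ preserves $\ol{\mc{F}}$-colimits while $C/X\in\ol{\mc{F}}$ for every $X\in\Psh^{\mc{F}}(C)$; hence $G$ preserves the tautological colimit $X\approx\colim_{C/X}\rho\pi_X$ and is therefore a pointwise left Kan extension of $G\rho$ along $\rho$. Fully faithfulness of restriction then follows from the theory of Kan extensions \cite{lurie-higher-topos}*{4.3.2.17}, and combined with your essential-surjectivity argument this yields a complete proof.
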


We refer to Lurie for the proof, but note that his proof both provides and
relies on the following,  which we will use later. 

\begin{thm}[Embedding theorem]\cite{lurie-higher-topos}*{5.3.6.2}\label{thm:colimit-embedding}
 Given any classes $\mc{F}\subseteq \mc{G}$ of $\infty$-categories and an
 $\infty$-category $A$ which has $\mc{F}$-colimits, there exists a
 fully faithful functor $i\colon A\rightarrowtail B$ such that (i) $B$
 has all $\mc{G}$-colimits and (ii) $i$ preserves all $\mc{F}$-colimits.
\end{thm}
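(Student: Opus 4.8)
The plan is to construct $B$ as a localization of the full presheaf category $\Psh(A)$, inverting exactly those maps that measure the failure of the Yoneda embedding to preserve the $\mc{F}$-colimits already present in $A$. Since $A$ need not be small, I would first pass to a larger universe in which $A$ is small, so that $\Psh(A)=\Fun(A^\op,\igpd')$ (with $\igpd'$ the $\infty$-groupoids of the larger universe) is presentable and $\rho\colon A\to\Psh(A)$ is fully faithful. For each diagram $p\colon J\to A$ with $J\in\mc{F}$ the colimit $\colim_J p$ exists in $A$, and there is a canonical comparison map $c_p\colon \colim_J(\rho p)\to \rho(\colim_J p)$ in $\Psh(A)$; the functor $\rho$ preserves the colimit of $p$ exactly when $c_p$ is an equivalence.

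Next I would let $S=\{\,c_p\,\}$ be the collection of all such comparison maps and form the localization $L\colon \Psh(A)\to B$ at $S$, with $B\subseteq\Psh(A)$ the full subcategory of $S$-local objects. Because $\mc{F}$ consists of small $\infty$-categories and $A$ is small in the larger universe, there are only set-many comparison maps up to equivalence, so $S$ is small and $L$ is an accessible, hence reflective, localization of a presentable $\infty$-category. In particular $B$ is presentable, so it has all small colimits and a fortiori all $\mc{G}$-colimits, which gives (i); moreover $L$, being a left adjoint, preserves all colimits.

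For (ii), set $i\defeq L\rho\colon A\to B$. Given $p\colon J\to A$ with $J\in\mc{F}$, preservation of colimits by $L$ gives $\colim_J(i p)\approx L(\colim_J \rho p)$, and since $L$ inverts $c_p$ we have $L(\colim_J\rho p)\approx L\rho(\colim_J p)=i(\colim_J p)$. Combining these, $i$ preserves the colimit of every $\mc{F}$-diagram in $A$.

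It remains to see that $i$ is fully faithful, and here the choice of $S$ is exactly what is needed. The key point is that every representable $\rho(a)$ is already $S$-local: using that $\rho$ is fully faithful one computes $\Map(\colim_J\rho p,\rho a)\approx \lim_J\Map_A(p,a)$, whereas the universal property of the colimit in $A$ gives $\Map(\rho(\colim_J p),\rho a)\approx \Map_A(\colim_J p,a)\approx \lim_J\Map_A(p,a)$, and $\Map(c_p,\rho a)$ is precisely the resulting comparison equivalence. Thus each $\rho(a)$ is local, the unit $\rho(a)\to L\rho(a)$ is an equivalence, and for $a,a'\in A$ the adjunction $L\dashv(\text{inclusion})$ together with locality of $\rho(a')$ yields $\Map_B(i a, i a')\approx \Map_{\Psh(A)}(\rho a,\rho a')\approx \Map_A(a,a')$, so $i$ is fully faithful. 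I expect the conceptual crux to be this locality computation, while the main technical obstacle is the set-theoretic one resolved by universe-hopping: one must arrange that $S$ is small, so that the localization is reflective and $B$ genuinely inherits its colimits.
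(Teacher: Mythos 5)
Your proposal is correct and follows essentially the same route as the paper's (sketched) proof, which constructs $B$ as a localization of $\Fun(A^\op,\wh{\igpd})$ in a larger universe following Lurie's HTT 5.3.6.2: your $S$-local objects are exactly the presheaves carrying the chosen $\mc{F}$-colimit cones of $A$ to limit cones, which is the full subcategory that reference takes. The two points you flag as the crux---locality of representables and universe-hopping to make $S$ a set---are precisely the content of the cited argument, so nothing further is needed.
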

\begin{proof}[Sketch proof]
  Construct $B$ as a full subcategory (in fact, a localization)
  of $\Fun(A^\op,\wh{\igpd})$ 
  where $\wh\igpd$ is an $\infty$-category of $\infty$-groupoids in a
  suitably large universe.
\end{proof}

If the class $\mc{F}$ of small $\infty$-categories is essentially small relative to our chosen
universe (i.e., there is a set $\mc{F}'$ such that every object of
$\mc{F}$ is equivalent to one in $\mc{F}'$), then so is any
$\mc{F}$-colimit completion of a small category.
\begin{prop}\label{prop:small-colimit-completion-is-small}
  Let $C$ be a small $\infty$-category and let $\mc{F}\subseteq \icat$
  be an essentially small class of $\infty$-categories.  Then the
  free $\mc{F}$-colimit completion $\Psh^{\mc{F}}(C)$ is also essentially small.
\end{prop}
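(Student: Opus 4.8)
The plan is to realize $\Psh^{\mc{F}}(C)$ as the union of a transfinite chain of essentially small full subcategories of $\Psh(C)$, each obtained from the previous by adjoining $\mc{F}$-colimits. Fix a small set $\mc{F}'$ of representatives for $\mc{F}$, which exists since $\mc{F}$ is essentially small. I would define a chain $(S_\alpha)$ of full subcategories of $\Psh(C)$ indexed by ordinals: let $S_0$ be the essential image of $\rho\colon C\ra \Psh(C)$; let $S_{\alpha+1}$ be the full subcategory whose objects are those of $S_\alpha$ together with all colimits in $\Psh(C)$ of diagrams $J\ra S_\alpha$ with $J\in \mc{F}'$; and let $S_\lambda=\bigcup_{\alpha<\lambda}S_\alpha$ at limit ordinals. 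These colimits exist because $\Psh(C)$ is cocomplete.

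First I would check that each $S_\alpha$ is essentially small, by transfinite induction. The base case holds because $S_0\approx C$ is small. For the successor step, the diagrams to be adjoined are indexed by $\coprod_{J\in \mc{F}'}\Fun(J,S_\alpha)$, which is essentially small because $\mc{F}'$ is a small set and each $J$ is small while $S_\alpha$ is essentially small; hence $S_{\alpha+1}$ has an essentially small set of objects, and as a full subcategory of the locally small $\Psh(C)$ it is therefore essentially small. The limit step is a union, which remains essentially small as long as it is indexed by a small ordinal.

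Next, I would choose a small regular cardinal $\kappa$ large enough that every $J\in\mc{F}'$ is $\kappa$-small, hence $\kappa$-compact as an object of $\icat$; such $\kappa$ exists because $\mc{F}'$ is a small set of small categories. Set $S\defeq S_\kappa=\bigcup_{\alpha<\kappa}S_\alpha$. By the previous paragraph $S$ is a small-indexed union of essentially small categories, hence essentially small. It contains the representables, and I claim it is stable under $\mc{F}$-colimits: given a diagram $J\ra S$ with $J\in\mc{F}'$, the chain $(S_\alpha)_{\alpha<\kappa}$ is $\kappa$-filtered (as $\kappa$ is regular) with colimit $S$ in $\icat$, so $\kappa$-compactness of $J$ forces the diagram to factor through some $S_\alpha$ with $\alpha<\kappa$; its colimit then lies in $S_{\alpha+1}\subseteq S$.

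Finally, I would identify $S$ with $\Psh^{\mc{F}}(C)$. Since $S$ contains the representables and is stable under $\mc{F}$-colimits, minimality gives $\Psh^{\mc{F}}(C)\subseteq S$; conversely a transfinite induction shows each $S_\alpha\subseteq \Psh^{\mc{F}}(C)$, using that $\Psh^{\mc{F}}(C)$ contains the representables and is itself stable under $\mc{F}$-colimits. Hence $\Psh^{\mc{F}}(C)=S$ is essentially small. I expect the main obstacle to be the stability argument of the third paragraph: one must guarantee that an arbitrary $\mc{F}$-colimit of objects scattered across the whole chain is already captured at a bounded stage, and this is exactly where essential smallness of $\mc{F}$ is indispensable, since it is what allows a single small regular cardinal $\kappa$ to bound the compactness of every diagram shape simultaneously.
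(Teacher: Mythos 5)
Your proposal is correct and follows essentially the same route as the paper's proof: a transfinite chain of essentially small full subcategories of $\Psh(C)$ built by adjoining $\mc{F}$-colimits, truncated at a regular cardinal $\kappa$ chosen to bound the size of every shape in $\mc{F}$, with the key point that any $\mc{F}$-diagram in the union at stage $\kappa$ already factors through an earlier stage. The only difference is how that factorization is justified: the paper argues elementarily that a functor from a simplicial set with fewer than $\kappa$ nondegenerate cells into the increasing union of full subcategories lands in some stage below $\kappa$ (by regularity), whereas you invoke $\kappa$-compactness of $J$ in $\icat$ and the identification of the union as a $\kappa$-filtered colimit there --- a heavier tool that yields the factorization only up to natural equivalence, which still suffices since the stages are closed under equivalence and colimits are equivalence-invariant.
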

\begin{proof}
For each ordinal $\lambda$ define subcategories
$\mc{P}^\lambda\subseteq \PSh(C)$, with
\begin{itemize}
\item $\mc{P}^0=$ the essential image of the Yoneda functor
  $\rho\colon C\ra \PSh(C)$,
  \item $\mc{P}^\lambda=\bigcup_{\mu<\lambda} \mc{P}^\mu$ for any
    limit ordinal $\lambda$, and
  \item $\mc{P}^{\lambda+}=$ the full subcategory spanned by all
    colimits in $\PSh(C)$ of functors $J\ra \mc{P}^\lambda$, where
    $J\in \mc{F}$.   
  \end{itemize}
  Note that $\Psh^{\mc{F}}(C)= \bigcup_\lambda \mc{P}^\lambda$, and
  that since $\mc{F}$ is essentially small, so is every $\mc{P}^\lambda$.
  Choose a regular cardinal $\kappa$ such that every $J\in \mc{F}$ is
  equivalent to some simplicial set with fewer than $\kappa$
  non-degenerate cells.  Then $\mc{P}^\kappa\subseteq \Psh(C)$ is
  stable under $\mc{F}$-colimits, and so is equal to
  $\PSh^{\mc{F}}(C)$, since any functor $J\ra \mc{P}^\kappa$ such that
  $J$ has fewer than $\kappa$ non-degenerate cells factors through
  some $\mc{P}^\lambda$ with $\lambda<\kappa$.
\end{proof}

\section{Regular closure}
\label{sec:regular-closure}

Let $\mc{F}\subseteq\icat$ be a class of small $\infty$-categories.
The \dfn{regular closure} of $\mc{F}$ is defined to be the class
\[
  \ol{\mc{F}} \defeq \set{C\in \icat}{\text{$\Psh^{\mc{F}}(C)$
      contains the terminal presheaf}}.  
\]
Note that if $C\in \mc{F}$, then necessarily $\Psh^{\mc{F}}(C)$
contains the terminal presheaf, since the terminal presheaf is the
colimit of $\rho_C$ \eqref{subsec:slices-of-presheaves}.  Thus
$\mc{F}\subseteq \ol{\mc{F}}$. 
\begin{rem}
  Since $\Psh^{\mc{F}}(C)$ contains every representable presheaf, any
  terminal object of it is also terminal in $\Psh(C)$.  So we could
  instead say that $\ol{\mc{F}}$ 
  consists of $C$ such that its free $\mc{F}$-colimit completion
  $\Psh^{\mc{F}}(C)$ has a terminal object. 
\end{rem}

The significance of regular closure can be illustrated with an
elementary example.
\begin{exam}
  Recall that any coequalizer can be built from pushouts and binary
  coproducts by a simple recipe:
  \[
  \colim(f,g\colon A\rightrightarrows B) = \colim\bigl( A \xla{(\id,\id)}
    A\amalg A \xra{(f,g)} B\bigr).
\]
The point is that there is a \emph{universal example} of this recipe. 
  Consider the finite categories 
  $C=\{\bullet\rightrightarrows \bullet\}$, $P=\{\bullet\leftarrow
  \bullet\rightarrow \bullet\}$, and 
  $Q=\{\bullet,\bullet\}$ (the walking parallel pair of arrows, the
  walking span, 
  and the discrete category with two objects).
Then, applying the above recipe to the universal coequalizer (i.e.,
the colimit of $\rho\colon C\ra \Psh(C)$), we  compute that
\[
1\approx\colim_C\rho \in \Psh^{\mc{F}}(C) \qquad \text{where} \qquad
\mc{F}=\{P,Q\}. 
\]
Therefore we have that $C\in \ol{\mc{F}}$, which concisely encodes the
observation that any coequalizer can be built from colimits with
shapes in $\mc{F}$.
\end{exam}

The notion of regular closure gives a complete answer to the following
question: 
if a full subcategory of an  $\infty$-category is stable under
$\mc{F}$-colimits, what other kinds of 
colimits is it necessarily stable under?

\begin{prop}\label{prop:stable-colims-regular-closure}
  Let $\mc{F}\subseteq \icat$ be a class of small $\infty$-categories,
  and let $A\subseteq B$ be a full subcategory of an $\infty$-category
  $B$ which has all $\mc{F}$-colimits.  If
  $A$ is stable under $\mc{F}$-colimits in $B$, then it is also 
  stable under $\ol{\mc{F}}$-colimits.

  Furthermore, the above statement is the best possible, in the sense
  that if the previous sentence holds with  ``$\ol{\mc{F}}$'' replaced
  with some class $\mc{G}\subseteq \icat$, then we must have $\mc{G}\subseteq
  \ol{\mc{F}}$.  
\end{prop}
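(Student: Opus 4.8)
The plan is to prove the two assertions separately, using in both cases the defining minimality of $\Psh^{\mc{F}}(C)$ as the smallest full subcategory of $\Psh(C)$ that contains the representables and is stable under $\mc{F}$-colimits. For the forward implication the key device is to test membership in $A$ against a single colimit-preserving functor out of $\Psh(C)$; for the optimality statement, the key device is to feed the universal example $A=\Psh^{\mc{F}}(C)\subseteq \Psh(C)=B$ back into the hypothesis.

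For the forward implication, fix $C\in\ol{\mc{F}}$ and a diagram $f\colon C\ra A$. The awkward point is that $B$ is only assumed to have $\mc{F}$-colimits, so $\colim_C f$ need not obviously exist in $B$; I would remove this by first applying the embedding theorem \eqref{thm:colimit-embedding} (with $\mc{G}=\icat$) to obtain a fully faithful, $\mc{F}$-colimit-preserving $i\colon B\rightarrowtail B'$ with $B'$ cocomplete. Since $i$ preserves $\mc{F}$-colimits and $A$ is stable under them in $B$, the image of $A$ (via the fully faithful composite $A\subseteq B\xra{i} B'$) is again stable under $\mc{F}$-colimits, now inside a cocomplete category. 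Let $\wh{g}\colon\Psh(C)\ra B'$ be the colimit-preserving extension of $g\defeq i f$ along $\rho$ \eqref{thm:psh-free-colimit}. I would then consider the full subcategory $\set{P\in\Psh(C)}{\wh{g}(P)\in A}$ (with $A$ identified with its image $i(A)$): it contains the representables because $\wh{g}\rho\approx g$ lands in $A$, and it is stable under $\mc{F}$-colimits because $\wh{g}$ preserves all colimits while $A$ is $\mc{F}$-stable in $B'$. By minimality it therefore contains $\Psh^{\mc{F}}(C)$, and in particular it contains the terminal presheaf $1$ since $C\in\ol{\mc{F}}$. Thus $\colim_C g=\wh{g}(1)\in A$ by \eqref{cor:universal-colim}. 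Finally, because $i$ is fully faithful and this colimit lies in the essential image of $A$, it descends: $\colim_C f$ exists in $B$, lies in $A$, and is preserved by the inclusion. As $C\in\ol{\mc{F}}$ and $f$ were arbitrary, $A$ is stable under $\ol{\mc{F}}$-colimits.

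For the optimality assertion, suppose the class $\mc{G}$ has the stated property and fix $C\in\mc{G}$. I would apply the property to the universal example $A=\Psh^{\mc{F}}(C)$ sitting inside the cocomplete category $B=\Psh(C)$: here $A$ is stable under $\mc{F}$-colimits in $B$ by its very definition, so by hypothesis $A$ is also stable under $\mc{G}$-colimits. The Yoneda functor $\rho\colon C\ra\Psh(C)$ factors through $A$ (which contains all representables), giving a diagram $C\ra A$ of shape $C\in\mc{G}$; stability under $\mc{G}$-colimits then forces $\colim_C\rho$, computed in $B=\Psh(C)$, to lie in $A$. But that colimit is the terminal presheaf \eqref{subsec:slices-of-presheaves}, so $1\in\Psh^{\mc{F}}(C)$ and hence $C\in\ol{\mc{F}}$. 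Since $C\in\mc{G}$ was arbitrary, $\mc{G}\subseteq\ol{\mc{F}}$.

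The main obstacle is confined to the forward implication, and it is one of existence rather than of any conceptual subtlety: the ``test subcategory'' argument only locates a colimit once we already know it exists, so the genuine work is guaranteeing the existence of $\colim_C f$ under the weak hypothesis that $B$ merely has $\mc{F}$-colimits. Passing to a cocomplete $B'$ via \eqref{thm:colimit-embedding} and then descending along the fully faithful $i$ is exactly what makes this step rigorous; everything else is a direct application of the minimality of $\Psh^{\mc{F}}(C)$ together with \eqref{cor:universal-colim}.
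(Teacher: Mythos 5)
Your proposal is correct and follows essentially the same route as the paper's proof: the embedding theorem \eqref{thm:colimit-embedding} to reduce to a cocomplete ambient category, the colimit-preserving extension $\wh{g}\colon\Psh(C)\ra B'$ with $\wh{g}(\Psh^{\mc{F}}(C))\subseteq A$ via minimality, and the conclusion from $1\in\Psh^{\mc{F}}(C)$; the optimality argument via $A=\Psh^{\mc{F}}(C)\subseteq\Psh(C)$ is identical. Your write-up is somewhat more careful than the paper's about two points the paper leaves implicit—the stability of $i(A)$ under $\mc{F}$-colimits in $B'$ and the descent of the colimit along the fully faithful $i$ back to $B$—which is a virtue, not a deviation.
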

\begin{proof}
  First we show that $A$ is stable under $\ol{\mc{F}}$-colimits.
  Using the embedding theorem \eqref{thm:colimit-embedding}, we can
  choose a fully faithful $i\colon B\rightarrowtail B'$ such that $B'$
  is cocomplete and $i$ preserves all $\mc{F}$-colimits.  
Given 
  any functor $f\colon J\ra A$ with $J\in \ol{\mc{F}}$, we 
  want to show that the colimit of $f$ in $B'$ is actually in $A$.  
By
  the universal property $f$ extends over $\rho\colon J\rightarrowtail
  \Psh(J)$ to a colimit preserving functor $\wh{f}\colon \Psh(J)\ra
  B'$, so that the 
  colimit of  $f$ in $B'$ is
  equivalent to $\wh{f}(1)$ \eqref{cor:universal-colim}. Furthermore,
  $\wh{f}(\PSh^{\mc{F}}(J))\subseteq A$ since $A$ is stable under
  $\mc{F}$-colimits.  The claim follows since $J\in \ol{\mc{F}}$ so $1\in
  \Psh^{\mc{F}}(J)$.

Now suppose we know that stability under $\mc{F}$-colimits implies
stability under $\mc{G}$-colimits.  
  Suppose $J\in \mc{G}$ and consider $A\defeq \Psh^{\mc{F}}(J)$ and $B\defeq
  \Psh(J)$.    By hypothesis $A$ is stable under $\mc{G}$-colimits,
  and thus in particular $1\approx \colim_J \rho \in 
  \Psh^{\mc{F}}(J)$, so $J\in \ol{\mc{F}}$ as desired.
\end{proof}

Here is a variant characterization of regular closure, answering the
question: if an 
$\infty$-category has (or a functor preserves) $\mc{F}$-colimits, what
other kinds of colimits must it have (or preserve).
\begin{prop}\label{prop:have-and-preserve-colims-regular}
  Let $\mc{F}\subseteq \icat$ be a class of small $\infty$-categories.
  \begin{enumerate}
  \item Any $\infty$-category $A$ which has $\mc{F}$-colimits also has
    $\ol{\mc{F}}$-colimits.
    \item Let  $f\colon A\ra B$ be a functor between categories which
      have $\mc{F}$-colimits.  If $f$ preserves $\mc{F}$-colimits,
      then $f$ also preserves $\ol{\mc{F}}$-colimits.
    \end{enumerate}
   Furthermore the above is the best possible, in the sense that if
   (1) and (2) hold with  ``$\ol{\mc{F}}$'' replaced by some class
   $\mc{G}\subseteq \icat$, then $\mc{G}\subseteq\ol{\mc{F}}$.
\end{prop}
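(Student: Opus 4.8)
The plan is to reduce both existence and preservation to the universal property \eqref{thm:psh-f-free-colimit} by establishing a single formula for $\ol{\mc{F}}$-colimits, and then to settle optimality with the same example that drives \eqref{prop:stable-colims-regular-closure}.

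The key step I would prove first is the following identification: if $A$ has $\mc{F}$-colimits, $J\in\ol{\mc{F}}$, and $g\colon J\ra A$ is any functor, then $g$ has a colimit in $A$ given by $\wh{g}(1)$, where $\wh{g}\colon \Psh^{\mc{F}}(J)\ra A$ is the $\mc{F}$-colimit preserving extension of $g$ furnished by \eqref{thm:psh-f-free-colimit} and $1\in\Psh^{\mc{F}}(J)$ is the terminal presheaf (which lies there exactly because $J\in\ol{\mc{F}}$). To prove it I would use the embedding theorem \eqref{thm:colimit-embedding} to choose a fully faithful $i\colon A\rightarrowtail A'$ with $A'$ cocomplete and $i$ preserving $\mc{F}$-colimits. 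Since $A'$ is cocomplete, $ig$ extends to a colimit preserving $\wh{ig}\colon \Psh(J)\ra A'$ with $\wh{ig}(1)\approx\colim_J ig$ by \eqref{cor:universal-colim}. The restriction of $\wh{ig}$ to $\Psh^{\mc{F}}(J)$ preserves $\mc{F}$-colimits (because $\Psh^{\mc{F}}(J)$ is stable under $\mc{F}$-colimits in $\Psh(J)$, so such colimits agree in the two categories) and restricts to $ig$ along $\rho$, as does $i\wh{g}$; by the uniqueness clause of \eqref{thm:psh-f-free-colimit} the two agree, and evaluating at $1$ gives $i\wh{g}(1)\approx\colim_J ig$. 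Finally, $i(A)$ is stable under $\mc{F}$-colimits in the cocomplete $A'$, hence under $\ol{\mc{F}}$-colimits by \eqref{prop:stable-colims-regular-closure}; as $J\in\ol{\mc{F}}$, the colimit $\colim_J ig$ is therefore computed inside $i(A)$, and full faithfulness of $i$ transports it back to a colimit $\wh{g}(1)$ of $g$ in $A$. In particular statement (1) is the existence assertion here.

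For (2), let $f\colon A\ra B$ preserve $\mc{F}$-colimits between categories that have $\mc{F}$-colimits, and let $g\colon J\ra A$ with $J\in\ol{\mc{F}}$. Both $f\wh{g}$ and $\wh{fg}$ are $\mc{F}$-colimit preserving functors $\Psh^{\mc{F}}(J)\ra B$ whose restriction along $\rho$ is $fg$, so they agree by \eqref{thm:psh-f-free-colimit}. Applying the identification above in $A$ and then in $B$ yields $f(\colim_J g)\approx f\wh{g}(1)\approx\wh{fg}(1)\approx\colim_J fg$, so $f$ preserves this colimit. For optimality, suppose (1) and (2) hold with $\mc{G}$ in place of $\ol{\mc{F}}$ and fix $J\in\mc{G}$. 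Apply them to the inclusion $\iota\colon \Psh^{\mc{F}}(J)\hookrightarrow\Psh(J)$, which preserves $\mc{F}$-colimits by construction: by (1) the source has $\mc{G}$-colimits, so $\colim_J\rho$ exists there, and by (2) the inclusion preserves it, sending it to $\colim_J\rho\approx 1$ in $\Psh(J)$. Hence $1\in\Psh^{\mc{F}}(J)$, i.e.\ $J\in\ol{\mc{F}}$, proving $\mc{G}\subseteq\ol{\mc{F}}$.

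I expect the main obstacle to be (2): because $\ol{\mc{F}}$-colimits are not literally assembled from $\mc{F}$-colimits inside $A$ or $B$, preservation cannot be verified diagram by diagram in the naive way. The device that circumvents this is precisely the identification $\colim_J g\approx\wh{g}(1)$, which relocates every $\ol{\mc{F}}$-colimit to the single terminal presheaf $1$, an object reachable by $\mc{F}$-colimits of representables, so that uniqueness of $\mc{F}$-colimit preserving extensions does all the work.
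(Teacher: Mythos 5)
Your proposal is correct, but on part (2) it takes a genuinely different route from the paper. The paper's proof of (1) is essentially yours in outline (embed $A$ into a cocomplete category via \eqref{thm:colimit-embedding}, then apply \eqref{prop:stable-colims-regular-closure}), but for (2) the paper invokes the path-category criterion \eqref{prop:path-criterion-for-colimit-preservation} from the appendix: $f$ preserves $J$-colimits if and only if $\Path(f)\subseteq\LPath(f)$ is stable under $J$-colimits, so that stability under $\mc{F}$-colimits upgrades to stability under $\ol{\mc{F}}$-colimits by \eqref{prop:stable-colims-regular-closure}; the optimality clause is likewise delegated to that proposition. You avoid the appendix entirely: your key lemma identifies every $\ol{\mc{F}}$-colimit as $\wh{g}(1)$, and preservation then falls out of the uniqueness clause of \eqref{thm:psh-f-free-colimit}, which gives $f\wh{g}\approx\wh{fg}$. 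This is more self-contained (indeed your appeal to \eqref{prop:stable-colims-regular-closure} inside the key lemma is redundant, since the chain $\colim_J ig\approx\wh{ig}(1)\approx i\wh{g}(1)$ already places the colimit in $i(A)$), and it produces an explicit formula for $\ol{\mc{F}}$-colimits that the paper's argument leaves invisible; what the paper's route buys is reusability, since the path-category criterion is also the engine behind \eqref{prop:reg-class-cut-out}, so both results become one-line consequences of \eqref{prop:stable-colims-regular-closure}. One point you should make explicit: preservation of colimits is a statement about cones, not objects, so the chain $f(\colim_J g)\approx f\wh{g}(1)\approx\wh{fg}(1)\approx\colim_J fg$ is not by itself sufficient. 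What makes it sufficient is that $f\wh{g}\approx\wh{fg}$ is a natural isomorphism of functors on $\Psh^{\mc{F}}(J)$, applied to the tautological colimit cone $J^\rhd\ra\Psh^{\mc{F}}(J)$ with vertex $1$; your key lemma, read at the cone level, says precisely that $\wh{g}$ carries this cone to a colimit cone in $A$, and then $\wh{fg}$ carries it to one in $B$. Since your construction supplies exactly this functorial data, this is a matter of phrasing rather than a gap.
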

\begin{proof}
  To prove (1), choose any $\ol{\mc{F}}$-colimit preserving embedding
  $A\rightarrowtail 
  B$ to a cocomplete $\infty$-category \eqref{thm:colimit-embedding} and apply
  \eqref{prop:stable-colims-regular-closure}.     To prove (2), first
  note that by (1) both $A$ and $B$ have $\ol{\mc{F}}$-colimits.  Now
  apply the path-category criterion for colimit preservation (proved in
  the appendix
  \eqref{prop:path-criterion-for-colimit-preservation}),
  which says that $f$ preserves $J$-colimits for some
  $\infty$-category $J$ if and only if a certain 
  fully faithful functor  $\phi\colon \Path(f)\rightarrowtail
  \LPath(f)$ of ``path categories'' is stable under 
  $J$-colimits.  By hypothesis  $j$ is stable under $\mc{F}$-colimits,
  and so is stable under $\ol{\mc{F}}$-colimits by
  \eqref{prop:stable-colims-regular-closure}.

  To see that this is best possible, suppose that having
  $\mc{F}$-colimits implies the existence of $\mc{G}$-colimits, and
  preserving $\mc{F}$-colimits implies the preservation of
  $\mc{G}$-colimits.  Then in particular stability under
  $\mc{F}$-colimits implies stability under $\mc{G}$-colimits, so the
  claim follows from  \eqref{prop:stable-colims-regular-closure}.  
\end{proof}

These ideas give a characterization of regular closure in terms of
free colimit completion.
\begin{prop}\label{prop:stability-of-free-colim-completion}
  Let $\mc{F}\subseteq\icat$ be a class of small $\infty$-categories,
  and let $J\in \icat$.  Then $J\in \ol{\mc{F}}$ if and only if
  $\PSh^{\mc{F}}(C)\subseteq \Psh(C)$ is 
  stable under $J$-colimits for all $C\in \icat$.
\end{prop}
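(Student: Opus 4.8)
The plan is to deduce both implications directly from \eqref{prop:stable-colims-regular-closure}, specializing the ambient pair of categories there to presheaf categories; the statement is essentially a repackaging of that result together with the tautological exhibition of the terminal presheaf as $\colim_J\rho$.

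For the forward implication, suppose $J\in\ol{\mc{F}}$ and let $C\in\icat$ be arbitrary. I would set $A\defeq\Psh^{\mc{F}}(C)$ and $B\defeq\Psh(C)$. Since $\Psh(C)$ is cocomplete it has all $\mc{F}$-colimits, and by construction $A$ is stable in $B$ under $\mc{F}$-colimits. Hence \eqref{prop:stable-colims-regular-closure} applies and shows that $\Psh^{\mc{F}}(C)$ is stable under $\ol{\mc{F}}$-colimits; as $J\in\ol{\mc{F}}$, this gives stability under $J$-colimits in particular, and $C$ was arbitrary.

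For the reverse implication, I would only need the hypothesis for the single category $C\defeq J$. The Yoneda functor $\rho\colon J\ra\Psh(J)$ factors through $\Psh^{\mc{F}}(J)$, since the latter contains all representables, and it is a $J$-indexed diagram whose colimit in $\Psh(J)$ is the terminal presheaf, $1\approx\colim_J\rho$ (the tautological colimit of \eqref{subsec:slices-of-presheaves}, using $J/1\approx J$). By the assumed stability of $\Psh^{\mc{F}}(J)$ under $J$-colimits, this colimit already lies in $\Psh^{\mc{F}}(J)$, so $\Psh^{\mc{F}}(J)$ contains the terminal presheaf; this is exactly the condition $J\in\ol{\mc{F}}$.

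I expect no serious obstacle: essentially all the content is already packaged in \eqref{prop:stable-colims-regular-closure}, and the only points requiring care are checking that its hypotheses hold in the presheaf setting (cocompleteness of $\Psh(C)$ supplies the $\mc{F}$-colimits, and the defining property of $\Psh^{\mc{F}}(C)$ supplies the stability) and that the $J$-indexed colimit of $\rho$ is genuinely the terminal presheaf, as recorded in \eqref{subsec:slices-of-presheaves}. Both directions are then immediate.
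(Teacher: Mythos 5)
Your proposal is correct and follows essentially the same route as the paper: the forward direction applies \eqref{prop:stable-colims-regular-closure} to the pair $\Psh^{\mc{F}}(C)\subseteq\Psh(C)$, and the converse takes $C=J$ and uses $1\approx\colim_J\rho_J$. The paper's proof is just a terser rendering of exactly this argument.
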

\begin{proof}
  By construction $\Psh^{\mc{F}}(C)\subseteq \Psh(C)$ is stable under
  $\mc{F}$-colimits, and thus is stable under $\ol{\mc{F}}$-colimits by
  \eqref{prop:stable-colims-regular-closure}.  
For the converse, take $C=J$  and recall that $1\approx \colim_J
\rho_J$.
\end{proof}

As a consequence, free $\mc{F}$-colimit completion is the same as
free $\ol{\mc{F}}$-colimit completion.

\begin{prop}\label{prop:reg-closures-corr-to-free-colimit-completions}
  Given $\mc{F},\mc{G}\in \icat$, we have that
  $\ol{\mc{F}}=\ol{\mc{G}}$ if and only if
  $\Psh^{\mc{F}}(C)=\Psh^{\mc{G}}(C)$ for all $C\in \icat$.  In
  particular, $\Psh^{\mc{F}}(C)=\Psh^{\ol{\mc{F}}}(C)$.  
\end{prop}

\section{Regular classes}
\label{sec:regular-class}

Let $\mc{F}\subseteq \icat$ be a class of small $\infty$-categories.
We say that $\mc{F}$ is a  \dfn{regular class} if
$\mc{F}=\ol{\mc{F}}$, 
i.e., if for every small
$\infty$-category $C$, we have that 
$C\in \mc{F}$ whenever
$\Psh^{\mc{F}}(C)$ contains the terminal presheaf.

We can now justify the term ``regular closure''.
\begin{prop}
  Let $\mc{F}\subseteq \icat$ be a class of small
  $\infty$-categories.  Then $\ol{\mc{F}}$ is a regular class, and 
  is in fact the smallest regular class containing $\mc{F}$.
\end{prop}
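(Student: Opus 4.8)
The plan is to prove two things. First, that $\ol{\mc{F}}$ is itself a regular class, i.e. that $\ol{\ol{\mc{F}}}=\ol{\mc{F}}$. Second, that it is the smallest such: every regular class $\mc{G}$ with $\mc{F}\subseteq\mc{G}$ satisfies $\ol{\mc{F}}\subseteq\mc{G}$. The containment $\mc{F}\subseteq\ol{\mc{F}}$ is already recorded in \S\ref{sec:regular-closure}, so once these two facts are established, $\ol{\mc{F}}$ is indeed a regular class containing $\mc{F}$ and contained in every other such class, hence the smallest.

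For the first claim I would invoke \eqref{prop:reg-closures-corr-to-free-colimit-completions} directly. Applying that proposition with its two classes taken to be $\ol{\mc{F}}$ and $\mc{F}$, it asserts that $\ol{\ol{\mc{F}}}=\ol{\mc{F}}$ holds if and only if $\Psh^{\ol{\mc{F}}}(C)=\Psh^{\mc{F}}(C)$ for all $C\in\icat$. But this latter identity is precisely the ``in particular'' clause of the same proposition, namely $\Psh^{\mc{F}}(C)=\Psh^{\ol{\mc{F}}}(C)$. Hence $\ol{\mc{F}}=\ol{\ol{\mc{F}}}$ and $\ol{\mc{F}}$ is regular, with no further computation. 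The main conceptual content here is simply the recognition that idempotency of the closure operation is already packaged inside \eqref{prop:reg-closures-corr-to-free-colimit-completions}.

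For the second claim the one observation not yet available as a prior proposition is the monotonicity of free colimit completion: if $\mc{F}\subseteq\mc{G}$ then $\Psh^{\mc{F}}(C)\subseteq\Psh^{\mc{G}}(C)$ for every $C$. This is immediate from the defining minimality property of $\Psh^{\mc{F}}(C)$, since $\Psh^{\mc{G}}(C)$ contains the representables and is stable under $\mc{G}$-colimits, hence a fortiori stable under $\mc{F}$-colimits, and therefore contains the smallest such full subcategory $\Psh^{\mc{F}}(C)$. Monotonicity of $\Psh^{(-)}$ at once yields monotonicity of regular closure: if $J\in\ol{\mc{F}}$, then the terminal presheaf lies in $\Psh^{\mc{F}}(J)\subseteq\Psh^{\mc{G}}(J)$, so $J\in\ol{\mc{G}}$, i.e. $\ol{\mc{F}}\subseteq\ol{\mc{G}}$. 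Now given a regular $\mc{G}$ with $\mc{F}\subseteq\mc{G}$, this gives $\ol{\mc{F}}\subseteq\ol{\mc{G}}$, while regularity of $\mc{G}$ gives $\ol{\mc{G}}=\mc{G}$; combining, $\ol{\mc{F}}\subseteq\mc{G}$, as required. The argument is entirely formal, and I expect the only step meriting any care to be the monotonicity of $\Psh^{(-)}$, which is ``hard'' only in the trivial sense of being the lone ingredient not already stated as an earlier result.
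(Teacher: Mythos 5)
Your proposal is correct and follows essentially the same route as the paper: regularity of $\ol{\mc{F}}$ is deduced from the identity $\Psh^{\mc{F}}(C)=\Psh^{\ol{\mc{F}}}(C)$ of \eqref{prop:reg-closures-corr-to-free-colimit-completions}, and minimality from monotonicity of $\Psh^{(-)}$ (which the paper uses implicitly in the inclusion $\Psh^{\mc{F}}(C)\subseteq\Psh^{\mc{G}}(C)$) together with the definition of regular closure. Your only addition is spelling out that monotonicity step, which the paper leaves tacit.
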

\begin{proof}
  We have already noted that $\mc{F}\subseteq \ol{\mc{F}}$.
    That  $\ol{\mc{F}}$ is a regular class is immediate from the fact
    that $\Psh^{\mc{F}}(C)=\Psh^{\ol{\mc{F}}}(C)$
    \eqref{prop:reg-closures-corr-to-free-colimit-completions}.  
  If $\mc{G}$ is any regular class which contains $\mc{F}$, then for any $J\in \ol{\mc{F}}$ we have
  \[
  1\in \Psh^{\ol{\mc{F}}}(C) =\Psh^{\mc{F}}(C) \subseteq \Psh^{\mc{G}}(C),
\]
and thus $\ol{\mc{F}}\subseteq \mc{G}$.
\end{proof}

This is a convenient place to note that regular classes are closed
under finite products.
\begin{prop}
  Let $\mc{K}$ be a regular class of small $\infty$-categories.  Then
  $1\in \mc{K}$ and $J,K\in \mc{K}$ implies $J\times K\in \mc{K}$.
\end{prop}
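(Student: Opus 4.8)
The plan is to use regularity to reduce both assertions to membership in $\ol{\mc{K}}$: since $\mc{K}=\ol{\mc{K}}$, it suffices to show that the terminal presheaf lies in $\Psh^{\mc{K}}(1)$ and in $\Psh^{\mc{K}}(J\times K)$. Throughout I would lean on two facts: that $\Psh^{\mc{K}}(C)\subseteq\Psh(C)$ contains every representable and is stable under $\mc{K}$-colimits by construction, and that the terminal presheaf on any $C$ is $\colim_C\rho_C$ \eqref{subsec:slices-of-presheaves}.

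First I would dispatch $1\in\mc{K}$, which is immediate. Indeed $\Psh(1)=\Fun(1^\op,\igpd)\approx\igpd$, and the Yoneda functor $\rho_1$ carries the unique object of $1$ to the point, which is terminal in $\igpd$. Hence the terminal presheaf on $1$ is itself representable, so it lies in $\Psh^{\mc{K}}(1)$ automatically; thus $1\in\ol{\mc{K}}=\mc{K}$. (In fact this argument shows $1\in\ol{\mc{F}}$ for every class $\mc{F}$.)

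For the product, I would write $\rho=\rho_{J\times K}$ and recall that the terminal presheaf is $1\approx\colim_{J\times K}\rho$. By the standard Fubini decomposition of a colimit over a product as an iterated colimit, this is computed as $\colim_{j\in J}\bigl(\colim_{k\in K}\rho(j,k)\bigr)$. For each fixed $j$, the inner colimit $\colim_{k\in K}\rho(j,-)$ is a $K$-colimit of representable presheaves; since $K\in\mc{K}$ and $\Psh^{\mc{K}}(J\times K)$ is stable under $\mc{K}$-colimits, this inner colimit lies in $\Psh^{\mc{K}}(J\times K)$. Assembling these fiberwise colimits into a functor $G\colon J\ra\Psh^{\mc{K}}(J\times K)$, the outer colimit $\colim_J G$ is a $J$-colimit of objects of $\Psh^{\mc{K}}(J\times K)$; since $J\in\mc{K}$, it too lies in $\Psh^{\mc{K}}(J\times K)$. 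But $\colim_J G\approx\colim_{J\times K}\rho\approx 1$, so the terminal presheaf lies in $\Psh^{\mc{K}}(J\times K)$, i.e.\ $J\times K\in\ol{\mc{K}}=\mc{K}$.

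The one step that requires genuine care is the Fubini decomposition together with the assembly of the fiberwise colimits into an honest functor $G\colon J\ra\Psh^{\mc{K}}(J\times K)$ whose colimit recovers $\colim_{J\times K}\rho$; this is the usual statement that a colimit indexed by a product is computed one variable at a time (left Kan extension along the projection $J\times K\ra J$). Once this is in hand, the rest is purely formal, since the essential content is simply that $\Psh^{\mc{K}}(J\times K)$ is stable under both $J$- and $K$-colimits because $J,K\in\mc{K}$.
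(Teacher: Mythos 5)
Your proof is correct and takes essentially the same route as the paper: the terminal presheaf on $1$ is representable, and for products one combines the Fubini decomposition of a $J\times K$-colimit into iterated $J$- and $K$-colimits with the stability of $\Psh^{\mc{K}}$ under $\mc{K}$-colimits. The only organizational difference is that the paper establishes stability of $\Psh^{\mc{K}}(C)$ under $J\times K$-colimits for arbitrary $C$ and then cites its characterization of regular closure via stability \eqref{prop:stability-of-free-colim-completion}, whereas you specialize at once to $C=J\times K$ and the universal colimit $1\approx\colim_{J\times K}\rho$, which simply inlines the proof of that cited result.
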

\begin{proof}
  That $1\in \mc{K}$ is clear, since the terminal object of
  $\Psh^{\mc{K}}(1)$ is a representable presheaf.
  For closure under pairwise products, note  that $J\times K$-colimits
  can be 
  computed as the composite
  \[
  \Fun(J\times K,\PSh(C))=\Fun(J,\Fun(K,\Psh(C))) \xra{\colim_J}
  \Fun(K,\PSh(C)) \xra{\colim_K} \PSh(C).
\]
Since colimits in functor categories are computed objectwise, if 
$J,K\in \mc{F}$ then $\Psh^{\mc{F}}(C)$ is stable under $J\times
K$-colimits, whence the claim follows from
\eqref{prop:stability-of-free-colim-completion}.
\end{proof}

\begin{rem}
An earlier preprint version of this paper used the terms
\emph{filtering class} and \emph{filtering closure} for what we are here regular
  class and regular closure.  I've come to feel that ``filtering'' terminology should be
reserved 
for concepts closer to the classical notion of filtered categories,
so here I reserve it for the \emph{filtration classes} described in
\eqref{subsec:filtration-and-cofiltration}.  The use of the word
``regular'' is suggested by an analogy with regular cardinals.  In
particular, the regular classes generated by classes of sets of
bounded cardinality correspond exactly to regular cardinals
\eqref{prop:reg-and-irreg-cardinals}. 
\end{rem}

\section{Explicit description of free colimit completion}
\label{sec:explicit-description-free-colimit}

Using the idea of regular closure, we can give an explicit description
of $\PSh^{\mc{F}}(C)$ as a full subcategory of presheaves.
\begin{prop}\label{prop:regular-criterion-for-colim-closure}
  Let $\mc{F}\subseteq\icat$ be a class of small $\infty$-categories,
  and let $C$ be a small $\infty$-category.  Then 
  \[
  \Psh^{\mc{F}}(C)=\set{X\in \Psh(C)}{ C/X\in \ol{\mc{F}}}.
  \]
\end{prop}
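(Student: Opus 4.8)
The plan is to prove the two inclusions separately, establishing ``$\supseteq$'' first and then bootstrapping it to obtain ``$\subseteq$''. Throughout I use the slice machinery of \S\ref{subsec:slices-of-presheaves}: the equivalence $\kappa\colon \Psh(C)_{/X}\xra{\sim}\Psh(C/X)$, the colimit-preserving functor $\wh{\pi_X}\colon \Psh(C/X)\ra \Psh(C)$ (the forgetful functor through $\kappa^{-1}$), and the slice identity $(C/X)/\wt{Y}\approx C/Y$.

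First I would show that $C/X\in\ol{\mc{F}}$ implies $X\in\Psh^{\mc{F}}(C)$. Since $\wh{\pi_X}$ carries representables of $C/X$ to representables of $C$ and preserves all colimits, the preimage $\wh{\pi_X}^{-1}(\Psh^{\mc{F}}(C))$ is a full subcategory of $\Psh(C/X)$ which contains the representables and is stable under $\mc{F}$-colimits; by minimality it therefore contains $\Psh^{\mc{F}}(C/X)$, so $\wh{\pi_X}$ carries $\Psh^{\mc{F}}(C/X)$ into $\Psh^{\mc{F}}(C)$. Now the terminal presheaf $1_{C/X}$ corresponds under $\kappa^{-1}$ to the terminal object $\id_X$ of $\Psh(C)_{/X}$, which the forgetful functor sends to $X$, so $\wh{\pi_X}(1_{C/X})\approx X$. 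If $C/X\in\ol{\mc{F}}$ then $1_{C/X}\in\Psh^{\mc{F}}(C/X)$ by definition of regular closure, and applying $\wh{\pi_X}$ gives $X\in\Psh^{\mc{F}}(C)$.

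For the reverse inclusion I would show that $\mc{D}\defeq\set{X\in\Psh(C)}{C/X\in\ol{\mc{F}}}$ contains the representables and is stable under $\mc{F}$-colimits; minimality of $\Psh^{\mc{F}}(C)$ then forces $\Psh^{\mc{F}}(C)\subseteq\mc{D}$. The representables are immediate: $C/\rho(c)\approx C_{/c}$ has a terminal object $\id_c$, so its terminal presheaf is the representable $\rho_{C_{/c}}(\id_c)$ and in particular lies in $\Psh^{\mc{F}}(C_{/c})$, whence $C/\rho(c)\in\ol{\mc{F}}$.

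The real obstacle is stability of $\mc{D}$ under $\mc{F}$-colimits. Given $g\colon J\ra\mc{D}$ with $J\in\mc{F}$ and $X\defeq\colim_J g$, I must show $1_{C/X}\in\Psh^{\mc{F}}(C/X)$. The idea is to transport the colimit into the slice: each coprojection gives $\wt{g(j)}=(g(j)\ra X)\in\Psh(C)_{/X}$, and because the forgetful functor $\Psh(C)_{/X}\ra\Psh(C)$ creates colimits, the diagram $j\mapsto\wt{g(j)}$ has colimit the terminal object $\id_X$. Applying the colimit-preserving equivalence $\kappa$ then exhibits $1_{C/X}\approx\colim_J\kappa(\wt{g(j)})$ as a $J$-colimit in $\Psh(C/X)$. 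It remains to check each $\kappa(\wt{g(j)})\in\Psh^{\mc{F}}(C/X)$, and here I invoke the already-proven ``$\supseteq$'' direction \emph{at the base $C/X$}: by the slice identity $(C/X)/\wt{g(j)}\approx C/g(j)$ and the hypothesis $g(j)\in\mc{D}$, the point category of $\kappa(\wt{g(j)})$ over $C/X$ lies in $\ol{\mc{F}}$, so $\kappa(\wt{g(j)})\in\Psh^{\mc{F}}(C/X)$. Since $\Psh^{\mc{F}}(C/X)$ is stable under $J$-colimits, $1_{C/X}\in\Psh^{\mc{F}}(C/X)$, i.e. $C/X\in\ol{\mc{F}}$. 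The delicate points are that colimits in $\Psh(C)_{/X}$ are computed in $\Psh(C)$ (so the slice colimit really is $\id_X$) and the careful bookkeeping of the two slice equivalences; no new ingredient beyond \S\ref{subsec:slices-of-presheaves} is required, and the argument is non-circular since the stability step uses only the ``$\supseteq$'' direction.
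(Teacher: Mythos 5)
Your proof is correct; it relies on the same slice machinery as the paper but organizes the argument differently. The paper's proof establishes the single identity $\Psh^{\mc{F}}(C/X)=\wh{\pi_X}^{-1}(\Psh^{\mc{F}}(C))$, deduced from the fact that $\wh{\pi_X}$ preserves \emph{and reflects} both colimits and representability, and then reads off both directions of the proposition at once from $\wh{\pi_X}(1)\approx X$. Your ``$\supseteq$'' step is exactly the easy inclusion of that preimage identity, proved the same way. Where you genuinely diverge is ``$\subseteq$'': where the paper needs the harder (reflection) inclusion of the preimage identity, you instead show directly that the class $\mc{D}\defeq\set{X\in\Psh(C)}{C/X\in\ol{\mc{F}}}$ contains the representables and is stable under $\mc{F}$-colimits, and invoke minimality of $\Psh^{\mc{F}}(C)$. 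Your stability argument --- lifting the colimit cone into $\Psh(C)_{/X}$, using creation of colimits by the slice projection, then identifying point categories via $(C/X)/\wt{Y}\approx C/Y$ so that the already-proved easy direction can be applied over the base $C/X$ --- makes explicit an induction of essentially the same kind that the paper compresses into the phrase ``a straightforward consequence of (1) and (2)'', since proving the reflection inclusion also requires lifting diagrams along $\wh{\pi_X}$. Your bootstrap is legitimately non-circular, because the easy direction is established uniformly for every small base category and $C/X$ is again small. The trade-off: the paper's route is more economical and delivers the stronger preimage statement (essentially \eqref{cor:colim-closure-slice}) as a byproduct, while your route avoids reflection of representability altogether, replacing it with the slice identity of \S\ref{subsec:slices-of-presheaves}, and lays out in the open the generation-by-colimits induction that the paper leaves to the reader.
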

\begin{proof}
  By definition of regular closure, we need to show that $X\in
  \Psh^{\mc{F}}(C)$ if and only if $1\in \Psh^{\mc{F}}(C/X)$.
  We make use  of the functor 
  \[
  \wh{\pi_X}\colon \Psh(C/X)\ra \Psh(C),
\]
which corresponds under the equivalence $\kappa\colon \Psh(C)_{/X}\approx
  \Psh(C/X)$ to the evident forgetful functor for the slice
  \eqref{subsec:slices-of-presheaves}.  Because 
  it is equivalent to  such a forgetful functor, it has the following
  properties. 
\begin{enumerate}
\item The functor $\wh{\pi_X}$ both preserves and reflects colimits: a
  small   diagram in $\PSh(C/X)$ is a colimit if and only if its image
  under $\wh{\pi_X}$
  in $\Psh(C)$   is a colimit.
  \item The functor $\wh{\pi_X}$ both preserves and reflects
    representability: a presheaf $F$ on $C/X$ is representable if and
    only if its image $\wh{\pi_X}(F)$ is a representable presheaf on
    $C$.
\item The image of the terminal presheaf under $\wh{\pi_X}$ is
  isomorphic to $X$.
\end{enumerate}
A straightforward consequence of (1) and (2) is that
$\Psh^{\mc{F}}(C/X)$ is precisely equal to the preimage of
$\PSh^{\mc{F}}(C)$ under $\wh{\pi_X}$.  The claim then follows from (3).
\end{proof}

Thus, the formation of free colimit completion is compatible with
taking slices, in the following sense.  
\begin{cor}\label{cor:colim-closure-slice}
  For any class $\mc{F}\in \icat$, any $C\in \icat$, and any presheaf
  $X\in \Psh(C)$, the equivalence $\kappa\colon \Psh(C)_{/X} \ra
  \Psh(C/X)$ restricts to an equivalence 
  $\PSh^{\mc{F}}(C)\times_{\Psh(C)} \Psh(C)_{/X}\ra
  \Psh^{\mc{F}}(C/X)$ of full subcategories.
\end{cor}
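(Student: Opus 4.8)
The plan is to exploit the fact that $\kappa\colon \Psh(C)_{/X}\xra{\sim}\Psh(C/X)$ is \emph{already} an equivalence of the ambient categories, so that essentially nothing remains except to check that it matches the two full subcategories object-for-object: an equivalence which carries one full subcategory onto another (up to equivalence of objects) restricts to an equivalence between them. Thus the entire task reduces to identifying each side as a preimage of $\PSh^{\mc{F}}(C)$ and checking these preimages correspond under $\kappa$.

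First I would identify the left-hand subcategory concretely. Since $\PSh^{\mc{F}}(C)\subseteq\Psh(C)$ is full, the pullback $\PSh^{\mc{F}}(C)\times_{\Psh(C)}\Psh(C)_{/X}$ is simply the full subcategory of $\Psh(C)_{/X}$ spanned by those $\wt{Y}=(Y\ra X)$ with $Y\in\PSh^{\mc{F}}(C)$; that is, it is the preimage of $\PSh^{\mc{F}}(C)$ under the forgetful functor $U\colon\Psh(C)_{/X}\ra\Psh(C)$.

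Next I would invoke the identification established inside the proof of \eqref{prop:regular-criterion-for-colim-closure}: because $\wh{\pi_X}\colon\Psh(C/X)\ra\Psh(C)$ preserves and reflects both colimits and representability, the subcategory $\Psh^{\mc{F}}(C/X)$ is precisely the preimage of $\PSh^{\mc{F}}(C)$ under $\wh{\pi_X}$. Combining this with the defining compatibility $\wh{\pi_X}\circ\kappa\approx U$ (which is how $\wh{\pi_X}$ was obtained, \S\ref{subsec:slices-of-presheaves}), we get that $\kappa^{-1}(\Psh^{\mc{F}}(C/X))$ is the preimage of $\PSh^{\mc{F}}(C)$ under $\wh{\pi_X}\circ\kappa\approx U$, hence equals the left-hand subcategory identified in the previous step. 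Therefore $\kappa$ carries the left-hand full subcategory onto the right-hand one, and restricts to the claimed equivalence.

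There is no genuinely hard step: the corollary is a formal repackaging of \eqref{prop:regular-criterion-for-colim-closure}, and the only point requiring care is bookkeeping — correctly recognizing the pullback as the preimage $U^{-1}(\PSh^{\mc{F}}(C))$, and tracking the natural isomorphism $\wh{\pi_X}\circ\kappa\approx U$ closely enough that the two preimages are seen to be identified under $\kappa$.
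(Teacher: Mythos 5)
Your proposal is correct and follows essentially the same route the paper intends: the corollary is stated as an immediate consequence of \eqref{prop:regular-criterion-for-colim-closure}, whose proof explicitly establishes the identification you rely on, namely that $\Psh^{\mc{F}}(C/X)$ is precisely the preimage of $\PSh^{\mc{F}}(C)$ under $\wh{\pi_X}$. Transporting this preimage along $\kappa$ via $\wh{\pi_X}\circ\kappa\approx U$ and recognizing the pullback as $U^{-1}(\PSh^{\mc{F}}(C))$ is exactly the intended bookkeeping, so your writeup just makes the paper's implicit argument explicit.
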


\section{Cofinality}
\label{sec:cofinality}

We recall the notion of a \dfn{cofinal functor}\footnote{I'll use term
  as in \cite{lurie-higher-topos}.  Some sources prefer \emph{final
    functor}.  In \cite{kerodon}*{02MZ}, these are
  called  \emph{right cofinal functors}.} $f\colon C\ra D$, as
defined in \cite{lurie-higher-topos}*{4.1.1}  to which we refer for a
definition.  We will only need the following equivalent
characterizations.

\begin{lemma}\label{lemma:cofinal-colim-pres}
  Let $f\colon C\ra D$ be a functor between small $\infty$-categories.  The
  following are equivalent.
  \begin{enumerate}
  \item $f$ is cofinal.
 \item $f^*\colon \Fun(D^\rhd,A)\ra\Fun(C^\rhd, A)$
(restriction along $f$) preserves all colimit cones which exist, for
any $A$.
  \item for every object $d$ in $D$, the pullback $C\times_D
    D_{d/}$ is weakly contractible.
  \end{enumerate}
\end{lemma}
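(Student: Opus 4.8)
The plan is to recognize conditions (1)--(3) as the standard repackagings of cofinality and to derive them from \cite{lurie-higher-topos}*{4.1}, taking Lurie's definition as the point of departure: $f$ is cofinal exactly when, for every right fibration $X\ra D$, restriction along $f$ induces a weak homotopy equivalence on spaces of sections. The entire content of the lemma is then the dictionary translating such section spaces into, respectively, colimit cones (condition (2)) and weak contractibility of comma categories (condition (3)). I would prove $(1)\Leftrightarrow(2)$ directly, and then obtain (3) as the special case of (2) that already detects everything.

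For $(1)\Leftrightarrow(2)$, which is essentially \cite{lurie-higher-topos}*{4.1.1.8}, I would test a cone $\bar q\colon D^\rhd\ra A$, with underlying diagram $q$ and cone point $a$, by mapping it into an arbitrary object $a'\in A$. The cone $\bar q$ is a colimit cone iff for every $a'$ the comparison $\Map_A(a,a')\ra \lim_{D^\op}\Map_A(q(-),a')$ is an equivalence, and the target here is the limit of the presheaf $\Map_A(q(-),a')$ on $D$, i.e.\ the space of sections of the right fibration it classifies; the restricted cone $\bar q\circ f^\rhd$ is a colimit cone iff the analogous comparison over $C$ is an equivalence. Hence $f^*$ preserving all colimit cones for all $A$ is exactly the statement that restriction along $f$ is an equivalence on the section spaces of all these right fibrations. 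For the converse, applying this with $A=\Psh(D)$ and $q=\rho_D$ the Yoneda embedding (so that $\Map_{\Psh(D)}(\rho_D(-),\mathcal F)\simeq \mathcal F$ recovers an arbitrary presheaf $\mathcal F$) shows these section spaces detect cofinality, giving both implications.

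Condition (3) is then the restriction of (2) to $A=\igpd$ applied to corepresentable diagrams, and this is the step I expect to be the main obstacle. Given $d\in D$, apply (2) to the colimit cone of $\Map_D(d,-)\colon D\ra\igpd$: its colimit is $\realiz{D_{d/}}\simeq \point$, since the associated left fibration $D_{d/}\ra D$ has initial object $\id_d$, while the colimit of the restriction $\Map_D(d,f(-))$ is $\realiz{C\times_D D_{d/}}$, the geometric realization of the comma category. So cofinality forces every $C\times_D D_{d/}$ to be weakly contractible. The reverse implication --- that this pointwise contractibility already guarantees the equivalence on the section spaces of all right fibrations --- is the $\infty$-categorical form of Quillen's Theorem A, due to Joyal, recorded as \cite{lurie-higher-topos}*{4.1.3.1}; I would invoke it rather than reproduce the argument, whose proof runs through the combinatorics of right anodyne maps and a filtration of right fibrations and is the one genuinely non-formal ingredient.
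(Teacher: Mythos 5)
Your proof is correct, but it takes a genuinely different (and more self-contained) route than the paper, whose entire proof consists of two citations: \cite{lurie-higher-topos}*{4.1.1.8} for (1) $\Leftrightarrow$ (2) and \cite{lurie-higher-topos}*{4.1.3.1} for (1) $\Leftrightarrow$ (3). You instead reprove (1) $\Leftrightarrow$ (2) directly from Lurie's definition of cofinality, using the dictionary between limits of presheaves and section spaces of right fibrations, with the Yoneda embedding into $\Psh(D)$ supplying the converse; you then obtain (2) $\Rightarrow$ (3) by testing against the corepresentables $\Map_D(d,-)$, whose colimits compute the weak homotopy types of the classifying left fibrations $D_{d/}$ and $C\times_D D_{d/}$ --- this is precisely the mechanism (via \cite{lurie-higher-topos}*{3.3.4.6}) that the paper itself deploys later, in the proof of \eqref{lemma:cofinal-via-univ-colimit}; and you reserve the citation of \cite{lurie-higher-topos}*{4.1.3.1} (Joyal's $\infty$-categorical Theorem A) for the single non-formal implication (3) $\Rightarrow$ (1). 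Your decomposition buys a clear isolation of where the genuine content lies, and in-lines the formal arguments rather than outsourcing them; the paper's version buys brevity, since both equivalences are verbatim results in \cite{lurie-higher-topos}, at the cost of citing Theorem A even for the formal direction (1) $\Rightarrow$ (3) that you dispatch by hand.
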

\begin{proof}
  The equivalence (1) $\Leftrightarrow$ (2) is
  \cite{lurie-higher-topos}*{4.1.1.8}, while (1) $\Leftrightarrow$ (3)
  is \cite{lurie-higher-topos}*{4.1.3.1}.
  \end{proof}

We can restate this criterion from the point of view of colimit
completion.
\begin{lemma}\label{lemma:cofinal-via-univ-colimit}
  A functor $f\colon C\ra D$ between small $\infty$-categories is
  cofinal if and only if the colimit of the composite of $C\xra{f}
  D\xra{\rho_D} \Psh(D)$ is the terminal presheaf.
\end{lemma}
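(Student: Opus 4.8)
The plan is to reduce the statement to the pointwise characterization of cofinality in \eqref{lemma:cofinal-colim-pres}(3), by computing the colimit presheaf object-by-object. First I would set $P\defeq \colim_C(\rho_D\circ f)\in \Psh(D)$. Since $\Psh(D)=\Fun(D^\op,\igpd)$, both colimits and equivalences are detected objectwise; as the terminal presheaf is the one constant at $\point$, the presheaf $P$ is terminal if and only if $P(d)\simeq \point$ for every object $d$ of $D$. Evaluating the pointwise colimit gives $P(d)\simeq \colim_{c\in C}\Map_D(d,f(c))$, the colimit in $\igpd$ of the functor $\Map_D(d,f(-))\colon C\ra \igpd$.

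The key step is to identify this $\infty$-groupoid with a weak homotopy type of the relevant point category. The functor $\Map_D(d,-)\colon D\ra \igpd$ is classified by the left fibration $D_{d/}\ra D$, so its restriction $\Map_D(d,f(-))$ along $f$ is classified by the pullback $C\times_D D_{d/}\ra C$. By the standard identification of the colimit of an $\igpd$-valued functor with the weak homotopy type of the total space of its associated left fibration (its category of elements), this yields $P(d)\simeq \realiz{C\times_D D_{d/}}$. I expect this identification to be the main obstacle, in the sense that it is the one step that is not purely formal: it rests on the unstraightening equivalence together with the computation of colimits of space-valued diagrams, and so is where I would have to appeal to the relevant results of \cite{lurie-higher-topos} rather than to the material developed above.

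Combining the two steps, $P$ is terminal if and only if $C\times_D D_{d/}$ is weakly contractible for every object $d$ of $D$, which is exactly condition (3) of \eqref{lemma:cofinal-colim-pres}; both implications of the lemma then follow at once from this equivalence. I note that the forward direction admits a second, more conceptual argument avoiding the pointwise computation: the tautological colimit cone $\wt\rho\colon D^\rhd\ra \Psh(D)$ exhibiting $1\approx \colim_D\rho_D$ restricts along $f$ to a cone on $\rho_D\circ f$, and if $f$ is cofinal then \eqref{lemma:cofinal-colim-pres}(2) forces this restricted cone to remain a colimit cone, so $\colim_C(\rho_D\circ f)\approx 1$. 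The advantage of the pointwise approach, however, is that it delivers the converse on the same footing, so I would present the equivalence $P(d)\simeq \realiz{C\times_D D_{d/}}$ as the single computation driving the whole proof.
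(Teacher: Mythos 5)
Your proof is correct and is essentially the paper's own argument: the paper's proof of the converse direction is exactly your pointwise computation (colimits in $\Psh(D)$ computed objectwise, unstraightening $\Map_D(d,f(-))$ to the left fibration $C\times_D D_{d/}\ra C$, identifying its colimit with the weak homotopy type via \cite{lurie-higher-topos}*{3.3.4.6}, then invoking criterion (3) of \eqref{lemma:cofinal-colim-pres}). The only difference is organizational: the paper proves the forward direction by the cone-restriction argument---criterion (2) applied to the tautological cone exhibiting $1\approx\colim_D\rho_D$---which is precisely the alternative you mention, whereas you extract both implications from the single pointwise equivalence; both are valid.
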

\begin{proof}
  That a cofinal functor has this property is immediate from (1)
  $\Rightarrow$ (2) of 
  \eqref{lemma:cofinal-colim-pres} and  the fact
  that $\colim_D \rho_D\approx 1$.
  
  Conversely, suppose $\colim_D \rho_Df\approx 1$.  Since colimits are
  computed pointwise in $\Psh(D)$, we have that the colimit of the
  composite of $C\xra{f} D \xra{\Map_D(d,-)} \igpd$ is contractible
  for every object $d$ of $D$.  This composite is classified by the
  left fibration $C\times_D D_{d/}\ra C$, and its colimit is the weak
  homotopy type of $C\times_D D_{d/}$
  \cite{lurie-higher-topos}*{3.3.4.6}.  Therefore $f$ is cofinal using
  (3) $\Rightarrow$ (1) of 
  \eqref{lemma:cofinal-colim-pres}.
\end{proof}

\begin{lemma}\label{lemma:reg-class-closed-under-cofinality}
  Let $\mc{F}\subseteq \icat$ be a class of small
  $\infty$-categories.  If $f\colon C\ra D$ is a cofinal functor, then
  $C\in \mc{F}$ implies $D\in \ol{\mc{F}}$.
\end{lemma}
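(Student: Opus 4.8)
The plan is to deduce the statement immediately from the characterization of cofinality via the universal colimit, \eqref{lemma:cofinal-via-univ-colimit}, together with the defining stability of $\Psh^{\mc{F}}(D)$ under $\mc{F}$-colimits. Unwinding definitions, the conclusion $D\in\ol{\mc{F}}$ means precisely that the terminal presheaf $1$ lies in $\Psh^{\mc{F}}(D)$, so this is the target.

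First I would invoke \eqref{lemma:cofinal-via-univ-colimit}: because $f$ is cofinal, the colimit of the composite $C\xra{f} D\xra{\rho_D}\Psh(D)$ is the terminal presheaf $1$. The point of this step is that it exhibits $1$ explicitly as the colimit of the diagram $\rho_D f\colon C\ra \Psh(D)$.

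Then I would note two features of this diagram. Its indexing category is $C$, which by hypothesis lies in $\mc{F}$, so the colimit is an $\mc{F}$-colimit; and $\rho_D f$ factors through the representable presheaves, hence through $\Psh^{\mc{F}}(D)$. Since $\Psh^{\mc{F}}(D)\subseteq \Psh(D)$ is by construction stable under $\mc{F}$-colimits, the colimit of this diagram computed in $\Psh(D)$ must again lie in $\Psh^{\mc{F}}(D)$. As that colimit is $1$, we conclude $1\in \Psh^{\mc{F}}(D)$, i.e.\ $D\in \ol{\mc{F}}$.

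I do not expect a genuine obstacle: the content is entirely packaged into the two earlier facts. The only point deserving care is to confirm that ``stable under $\mc{F}$-colimits'' refers to colimits taken in the ambient category $\Psh(D)$ --- which is exactly the colimit that \eqref{lemma:cofinal-via-univ-colimit} identifies with $1$ --- so that the two ingredients line up without any comparison of colimits in different categories.
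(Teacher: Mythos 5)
Your proof is correct and is exactly the paper's argument: the paper's proof is the single line ``Immediate from \eqref{lemma:cofinal-via-univ-colimit},'' and your write-up simply spells out why it is immediate --- the cofinality of $f$ exhibits $1$ as the colimit in $\Psh(D)$ of the diagram $\rho_D f$, which is indexed by $C\in\mc{F}$ and takes values in representables, so stability of $\Psh^{\mc{F}}(D)$ under $\mc{F}$-colimits forces $1\in\Psh^{\mc{F}}(D)$. Your closing remark about stability referring to colimits computed in the ambient $\Psh(D)$ is also the right reading of the paper's definition in \S\ref{subsec:colimits}.
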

\begin{proof}
  Immediate from \eqref{lemma:cofinal-via-univ-colimit}.
\end{proof}

Say that a class $\mc{F}\subseteq \icat$ \dfn{cofinally generates} a
regular class $\mc{K}$ if for every $D\in \mc{K}$ there exists a
cofinal functor $f\colon C\ra D$ with $C\in \mc{F}$.  Note that this
implies $\ol{\mc{F}}=\mc{K}$.

In general, the free $\mc{F}$-colimit completion of a category $C$ can
be produced by an iterative procedure (as illustrated in the proof of
\eqref{prop:small-colimit-completion-is-small}), in which we build up
a subcategory $\Psh(C)$ by starting with representable presheaves, and
successively adjoining colimits of all $\mc{F}$-shaped 
diagrams in the subcategory.
When a class $\mc{F}$ cofinally generates a
regular class, this means exactly that every object in the free
$\mc{F}$-colimit completion can be realized \emph{immediately} as an
$\mc{F}$-colimit diagram of representable presheaves.
\begin{prop}
  Let $\mc{F}\subseteq\icat$ be  class of small $\infty$-categories,
  and $\mc{K}$ a regular class.  The following are equivalent.
  \begin{enumerate}
  \item The class $\mc{F}$ cofinally generates $\mc{K}$.
  \item For any $C\in \icat$, a presheaf $X\in \Psh(C)$ is in
    $\Psh^{\mc{K}}(C)$ if and only if $X$ is a colimit
    in $\PSh(C)$ of a functor of the form  $\rho_C\circ f$, where
    $f\colon J\ra C$ is a
    functor with $J\in \mc{F}$.
  \end{enumerate}
\end{prop}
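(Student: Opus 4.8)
The plan is to reduce both implications to the explicit description of free colimit completion \eqref{prop:regular-criterion-for-colim-closure} together with the characterization of cofinality by universal colimits \eqref{lemma:cofinal-via-univ-colimit}. Throughout I would use that $\mc{K}$ is regular, so that $\ol{\mc{K}}=\mc{K}$ and hence \eqref{prop:regular-criterion-for-colim-closure} reads $\Psh^{\mc{K}}(C)=\set{X\in\Psh(C)}{C/X\in\mc{K}}$. I would also invoke the remark preceding the statement, that cofinal generation forces $\ol{\mc{F}}=\mc{K}$, and in particular $\mc{F}\subseteq\ol{\mc{F}}=\mc{K}$.

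For $(2)\Rightarrow(1)$, I would simply feed the terminal presheaf into (2). Given $D\in\mc{K}$, the point category satisfies $D/1\approx D\in\mc{K}$, so $1\in\Psh^{\mc{K}}(D)$ by \eqref{prop:regular-criterion-for-colim-closure}. Applying the ``only if'' half of (2) to $X=1$ then produces a functor $f\colon J\ra D$ with $J\in\mc{F}$ and $\colim_J\rho_D f\approx 1$. But \eqref{lemma:cofinal-via-univ-colimit} says precisely that this last condition is equivalent to $f$ being cofinal, so $f\colon J\ra D$ is the required cofinal functor out of an object of $\mc{F}$, and $(1)$ holds.

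For $(1)\Rightarrow(2)$, the ``if'' direction of the criterion in (2) is pure stability: if $X\approx\colim_J\rho_C f$ with $J\in\mc{F}\subseteq\mc{K}$, then $\rho_C f$ lands among representables, which lie in $\Psh^{\mc{K}}(C)$; since $\Psh^{\mc{K}}(C)$ is stable under $\mc{K}$-colimits by construction and $J\in\mc{K}$, the colimit $X$ lies in $\Psh^{\mc{K}}(C)$. The substantive ``only if'' direction is where I would combine the tautological colimit with cofinality. Given $X\in\Psh^{\mc{K}}(C)$, the criterion \eqref{prop:regular-criterion-for-colim-closure} gives $C/X\in\mc{K}$, so by (1) there is a cofinal functor $g\colon J\ra C/X$ with $J\in\mc{F}$. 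Recall from \eqref{subsec:slices-of-presheaves} that $X$ is tautologically the colimit of $\rho_C\pi_X\colon C/X\ra\Psh(C)$. Since $g$ is cofinal, restriction along $g$ preserves this colimit cone \eqref{lemma:cofinal-colim-pres}, whence $X\approx\colim_J\rho_C\pi_X g$. Setting $f\defeq\pi_X g\colon J\ra C$ then exhibits $X$ as a colimit of $\rho_C f$ with $J\in\mc{F}$, as required.

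The one delicate point, and the step I would treat most carefully, is the bookkeeping in this last argument: one must apply ``cofinal functors preserve colimit cones'' to the concrete tautological cone $\wt\rho\colon(C/X)^\rhd\ra\Psh(C)_{/X}$ (equivalently its image in $\Psh(C)$), not to an abstract colimit, so that the restricted cone on $J^\rhd$ genuinely witnesses $X\approx\colim_J\rho_C f$ rather than merely some abstract equivalence of colimit objects. Everything else is a direct assembly of the cited results, so I do not anticipate a genuine obstacle beyond keeping this identification of cones honest.
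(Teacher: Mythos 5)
Your proposal is correct and follows essentially the same route as the paper's own proof: both directions rest on the criterion $\Psh^{\mc{K}}(C)=\set{X}{C/X\in\mc{K}}$, the tautological colimit $X\approx\colim_{C/X}\rho_C\pi_X$ restricted along a cofinal $J\ra C/X$, and the characterization of cofinality via $\colim\rho_D f\approx 1$ applied to $X=1$. The only differences are cosmetic: you spell out the easy stability half of (2) and the cone bookkeeping, which the paper leaves implicit.
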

\begin{proof}
  Suppose (1) holds.  If $X\in
  \Psh^{\mc{F}}(C)$ then $C/X\in \ol{\mc{F}}$
  \eqref{prop:regular-criterion-for-colim-closure}, and since $\mc{F}$
  cofinally generates $\mc{K}$ we may choose a cofinal $u\colon J\ra
  C/X$ with $J\in \mc{F}$.  Then $X\approx \colim_{C/X} \rho_C\pi_X
  \approx \colim_J \rho_C\pi_X u$, thus expressing $X$ as an
  $\mc{F}$-colimit of representables as desired.

  Conversely, suppose (2) holds.  Given any $C\in \mc{K}$ we can apply
  (2) to the terminal object $X=1$ in $\Psh^{\mc{F}}(C)$, obtaining a
  functor $f\colon J\ra C$ so that $J\in \mc{F}$ and $\colim
  \rho_C\circ f\approx 1$ in $\Psh(C)$.  From
  \eqref{lemma:cofinal-via-univ-colimit} we see that this $f$ is
  cofinal, as desired.
\end{proof}

\begin{rem}
  It is \emph{not} the case that all regular  closures are via cofinal
  generation.  Simple counterexamples include $\mc{F}=\varnothing$
  \eqref{subsec:minimal-regular-class} 
and $\mc{F}=\{\Delta^0\amalg
  \Delta^0\}$ \eqref{subsec:binary-coproducts-regular-class}, which
  do not cofinally generate their regular closures.
\end{rem}

\section{Regular classes and $\infty$-groupoids}
\label{sec:reg-class-infty-gpd}

Given an $\infty$-category $C$, we write $\eta\colon C\ra \len{C}$ for a
tautological map to its groupoid completion.
\begin{lemma}\label{lemma:gpd-completion-cofinal}
The tautological map $\eta\colon C\ra \len{C}$ from an
$\infty$-category to its group completion is cofinal.  
\end{lemma}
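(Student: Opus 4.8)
The plan is to verify cofinality through the universal-colimit criterion \eqref{lemma:cofinal-via-univ-colimit}: it suffices to show that the colimit of the composite $C \xra{\eta} \len{C} \xra{\rho_{\len{C}}} \Psh(\len{C})$ is the terminal presheaf $1$. The leverage comes from the fact that $\len{C}$ is an $\infty$-groupoid, so $\len{C}^\op \approx \len{C}$ and straightening identifies $\Psh(\len{C})=\Fun(\len{C}^\op,\igpd)$ with the slice $\igpd_{/\len{C}}$ (a presheaf category on a space is the slice of $\igpd$ over that space). First I would set up this equivalence and track the two distinguished objects through it.

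Under $\Psh(\len{C}) \approx \igpd_{/\len{C}}$, I claim the representable presheaf $\rho_{\len{C}}(e)$ corresponds to the point $e\colon \point \to \len{C}$: its unstraightening is the slice $\len{C}_{/e}\to \len{C}$, which is weakly contractible (it has a terminal object $\id_e$) with structure map selecting $e$. Dually, the terminal presheaf $1$, being the constant functor at $\point$, corresponds to the identity $\id\colon \len{C}\to \len{C}$. Consequently the composite $\rho_{\len{C}}\circ\eta\colon C \to \igpd_{/\len{C}}$ sends each object $c$ to the point $\eta(c)\colon \point \to \len{C}$.

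It then remains to compute the colimit of $c \mapsto (\eta(c)\colon \point \to \len{C})$ in $\igpd_{/\len{C}}$. Since the forgetful functor $\igpd_{/\len{C}} \to \igpd$ preserves colimits—a colimit of objects over $\len{C}$ is formed by taking the colimit of total spaces and equipping it with the induced structure map—the underlying $\infty$-groupoid of this colimit is $\colim_C \point$. But the groupoid completion is characterized precisely as this colimit, $\len{C}\approx \colim_C \point$, with tautological cocone given by $\eta$; hence the induced structure map $\colim_C \point \to \len{C}$ is an equivalence. Thus the colimit corresponds to $\id_{\len{C}}$, that is, to the terminal presheaf $1$, and \eqref{lemma:cofinal-via-univ-colimit} yields that $\eta$ is cofinal.

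The one point requiring care, and the step I would treat as the crux, is this final identification: one must check that the structure map on $\colim_C \point$ is not merely \emph{some} map to $\len{C}$ but the canonical equivalence exhibiting $\len{C}$ as the groupoid completion. This is exactly the assertion that the cocone $(\eta(c))_{c\in C}$ is the universal one, which is the content of the formula $\len{C}\approx \colim_C \point$; everything else is bookkeeping through the slice equivalence. (Alternatively, one could run the argument through criterion (3) of \eqref{lemma:cofinal-colim-pres}, identifying $C\times_{\len{C}}\len{C}_{d/}$ as the left fibration over $C$ classified by $\Map_{\len{C}}(d,\eta(-))$ and computing its weak homotopy type, but passing to the slice $\igpd_{/\len{C}}$ avoids having to commute a colimit past a mapping space.)
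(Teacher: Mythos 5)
Your proof is correct, but it takes a genuinely different route from the paper's. The paper argues with a specific simplicial-set model: it factors $C\ra\Delta^0$ as a right anodyne map $j\colon C\ra C'$ followed by a right fibration $C'\ra\Delta^0$, observes that $C'$ is then a Kan complex and $j$ a weak equivalence (hence a model for $\eta$), and concludes by citing that right anodyne maps are cofinal \cite{lurie-higher-topos}*{4.1.1.3}; this is very short but leans on the combinatorics of the model. Your argument stays model-independent: it uses the paper's own criterion \eqref{lemma:cofinal-via-univ-colimit} (proved earlier and independently of this lemma, so there is no circularity), the equivalence $\Psh(\len{C})\approx\igpd_{/\len{C}}$ with representables going to points and the terminal presheaf to $\id_{\len{C}}$, and the computation of the colimit in the slice --- all of which is correct. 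What your route costs is exactly the crux you flag: you need not just $\colim_C\point\approx\len{C}$ as objects but that the cocone $(\eta(c))_{c\in C}$ is the universal one, and saying this ``is the content of the formula'' is a little thin, since a citation such as \cite{lurie-higher-topos}*{3.3.4.6} identifies only the colimit object. To nail it down, observe that for any $Z\in\igpd$ the space of cocones on the constant $\point$-diagram with vertex $Z$ is $\Fun(C,Z)$, and under this identification applying $\Map(-,Z)$ to the induced map $u\colon\colim_C\point\ra\len{C}$ gives precisely restriction along $\eta$, i.e.\ $\Fun(\len{C},Z)\ra\Fun(C,Z)$; this is an equivalence by the universal property of the groupoid completion ($\eta$ is the unit of the reflection of $\icat$ onto $\igpd$), so $u^*$ is an equivalence for all $Z$ and hence $u$ is an equivalence. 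With that step made explicit your proof is complete, and it has the merit of deriving the lemma purely from the universal property of $\eta$ rather than from a chosen fibrant model.
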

\begin{proof}
  It suffices to prove this for a particular model of $\eta$.  For
  instance, there exists a factorization $C\xra{j}  C'\xra{p}
  \Delta^0$ into a right anodyne map $j$ followed by a right fibration
  $p$.  Since the target of $p$ is the terminal object, it is actually
  a Kan fibration, so $C'$ is an $\infty$-groupoid, while $j$ is a
  weak equivalence of simplicial sets.  Thus $j$ is a 
  groupoid completion of $C$, and it is a cofinal map since all right
  anodyne maps are cofinal \cite{lurie-higher-topos}*{4.1.1.3}.
\end{proof}

Recall that $\infty$-groupoids are the colimit completion of the
terminal category: $\Psh(1)\approx \igpd$.
Given a class
$\mc{F}\subseteq \icat$, we write $\igpd^{\mc{F}}\defeq
\Psh^{\msc{F}}(1)\subseteq \igpd$.

\begin{prop}\label{prop:regular-class-infty-gpd}
  Let $\mc{F}\subseteq \icat$ be a class of small
  $\infty$-categories.  Then
  \[
  \ol{\mc{F}}\cap \igpd = \len{\ol{\mc{F}}}= \igpd^{\mc{F}},
  \]
  i.e., the class of $\infty$-groupoids in $\ol{\mc{F}}$ is the class
  of groupoid completions of objects of $\ol{\mc{F}}$, which are the
  objects of the full subcategory of $\infty$-groupoids generated
  under $\mc{F}$-colimits by the point.
\end{prop}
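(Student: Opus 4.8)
The plan is to establish the two equalities separately, with $\ol{\mc{F}}\cap\igpd$ serving as the common middle term. For $\ol{\mc{F}}\cap\igpd=\igpd^{\mc{F}}$ I would specialize the explicit description \eqref{prop:regular-criterion-for-colim-closure} to the terminal category $C=1$. Under the identification $\Psh(1)\approx\igpd$, write $G$ for the $\infty$-groupoid corresponding to a presheaf $X$ on $1$; the point category $\pi_X\colon 1/X\ra 1$ is then the right fibration over the point classified by $X$, and since a right fibration over the terminal category is just its total $\infty$-groupoid we have $1/X\approx G$. Substituting into \eqref{prop:regular-criterion-for-colim-closure} gives
\[
\igpd^{\mc{F}}=\Psh^{\mc{F}}(1)=\set{X\in\Psh(1)}{1/X\in\ol{\mc{F}}}=\set{G\in\igpd}{G\in\ol{\mc{F}}}=\ol{\mc{F}}\cap\igpd.
\]

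For $\ol{\mc{F}}\cap\igpd=\len{\ol{\mc{F}}}$, the inclusion $\subseteq$ is immediate: any $\infty$-groupoid $G\in\ol{\mc{F}}$ is its own groupoid completion, $G\approx\len{G}$, and so appears in $\len{\ol{\mc{F}}}$. For the reverse inclusion I would take an arbitrary $C\in\ol{\mc{F}}$ and use that the tautological map $\eta\colon C\ra\len{C}$ is cofinal \eqref{lemma:gpd-completion-cofinal}. Applying \eqref{lemma:reg-class-closed-under-cofinality} to the class $\ol{\mc{F}}$ itself, a cofinal functor out of an object of $\ol{\mc{F}}$ has target in $\ol{\ol{\mc{F}}}$, which equals $\ol{\mc{F}}$ because $\ol{\mc{F}}$ is a regular class. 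Hence $\len{C}\in\ol{\mc{F}}$, and being an $\infty$-groupoid it lies in $\ol{\mc{F}}\cap\igpd$; this shows $\len{\ol{\mc{F}}}\subseteq\ol{\mc{F}}\cap\igpd$.

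None of these steps is deep, and the whole argument is essentially an assembly of results already in hand. The one point deserving care is the identification $1/X\approx G$ used in the first equality, i.e.\ that unstraightening over the terminal category returns the classifying $\infty$-groupoid. This is a routine special case of the slice formalism \eqref{subsec:slices-of-presheaves}, but it is the hinge of the argument, so I would spell it out rather than leave it tacit.
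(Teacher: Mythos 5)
Your proposal is correct and follows essentially the same route as the paper's own proof: the identification $\igpd^{\mc{F}}=\ol{\mc{F}}\cap\igpd$ via \eqref{prop:regular-criterion-for-colim-closure} specialized to $C=1$ (using $1/X\approx X$), the trivial inclusion $\ol{\mc{F}}\cap\igpd\subseteq\len{\ol{\mc{F}}}$, and the reverse inclusion via cofinality of $\eta\colon C\ra\len{C}$ together with \eqref{lemma:reg-class-closed-under-cofinality} applied to $\ol{\mc{F}}$ and the regularity $\ol{\ol{\mc{F}}}=\ol{\mc{F}}$. You merely spell out details the paper leaves tacit, such as the identification of the point category $1/X$ with the classifying $\infty$-groupoid.
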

\begin{proof}
  Clearly $\ol{\mc{F}}\cap \igpd \subseteq \len{\ol{\mc{F}}}$.
  The groupoid completion map $\eta\colon J\ra \len{J}$ is cofinal
  \eqref{lemma:gpd-completion-cofinal}, so   
  $J\in \ol{\mc{F}}$ implies $\len{J}\in \ol{\mc{F}}$
  \eqref{lemma:reg-class-closed-under-cofinality}, whence 
  $\len{\ol{\mc{F}}}\subseteq\ol{\mc{F}}\cap \igpd$.  We know that
  $X\in \Psh^{\mc{F}}(1)$ if and only if $X\approx X/1 \in
  \ol{\mc{F}}$ \eqref{prop:regular-criterion-for-colim-closure}, so
  $\igpd^{\msc{F}}=\ol{\mc{F}}\cap \igpd$ 
\end{proof}

\section{Constructing regular classes}
\label{sec:cut-out-reg-classes}

\begin{lemma}\label{lemma:reg-class-cut-out-by-stable}
  Let $\{A_i\subseteq B_i\}$ be a collection of full subcategories, in
  which each $B_i$ is a  cocomplete
  $\infty$-category, and let
  \[
  \mc{F}\defeq \set{C\in \icat}{\text{$A_i\subseteq B_i$ is stable under
      $C$-colimits for all $i$}}.  
\]
Then $\mc{F}$ is a regular class.
\end{lemma}
\begin{proof}
  We need to show that $1\in \Psh^{\mc{F}}(C)$ implies that $C\in
  \mc{F}$.  Given any functor $g\colon C\ra A_i\subseteq B_i$, consider
  the $\mc{F}$-colimit preserving extension $\wh{g}\colon
  \Psh^{\mc{F}}(C)\ra B_i$ of $g$.  By hypothesis the image of $\wh{g}$
  is contained in $A_i$, and since $1\in \Psh^{\mc{F}}(C)$ we have
  $\colim_C g \approx \wh{g}(1)\in A_i$.  Thus, we have shown that
  every  $A_i$ is
  stable under $C$  colimits in $B_i$, so $C\in \mc{F}$ as desired. 
\end{proof}

\begin{prop}\label{prop:reg-class-cut-out}
  Let $\{f_i\colon A_i\ra B_i\}$ be a collection of functors between
  $\infty$-categories, where each $A_i$ and $B_i$ is cocomplete.  Define
  \[
  \mc{F}\defeq \set{C\in \icat}{\text{$f_i$ preserves $C$-colimits
      for all $i$}}. 
\]
Then $\mc{F}$ is a regular class.
\end{prop}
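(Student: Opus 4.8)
The plan is to reduce this to \eqref{lemma:reg-class-cut-out-by-stable}, which already shows that classes cut out by a \emph{stability} condition are regular; the only new ingredient is the passage from \emph{preservation} to \emph{stability}, which is exactly what the path-category criterion \eqref{prop:path-criterion-for-colimit-preservation} provides, used just as in the proof of \eqref{prop:have-and-preserve-colims-regular}(2). So the whole task is to rewrite the preservation condition defining $\mc{F}$ as a stability condition of the shape appearing in \eqref{lemma:reg-class-cut-out-by-stable}.

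Concretely, I would argue as follows. For each index $i$, since $A_i$ and $B_i$ are cocomplete they have all $C$-colimits for every $C\in\icat$, so ``$f_i$ preserves $C$-colimits'' is meaningful for all $C$. The path-category criterion supplies a fully faithful functor $\phi_i\colon \Path(f_i)\rightarrowtail \LPath(f_i)$ such that, for each $C\in\icat$, the functor $f_i$ preserves $C$-colimits if and only if $\phi_i$ is stable under $C$-colimits. Viewing $\Path(f_i)$ as a full subcategory of $\LPath(f_i)$ via $\phi_i$, this exhibits
\[
\mc{F}=\set{C\in\icat}{\text{$\Path(f_i)\subseteq\LPath(f_i)$ is stable under $C$-colimits for all $i$}},
\]
which is precisely a class of the form handled by \eqref{lemma:reg-class-cut-out-by-stable}. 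Hence $\mc{F}$ is regular.

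The main obstacle will be checking the hypotheses feeding into this reduction: one needs each $\LPath(f_i)$ to be cocomplete (so that \eqref{lemma:reg-class-cut-out-by-stable} applies) and $\phi_i$ to be a genuine full-subcategory inclusion with the stated ``iff'' valid for all $C\in\icat$. This rests entirely on the construction of the path categories in the appendix, where one verifies that $\LPath(f_i)$ inherits cocompleteness from $A_i$ and $B_i$. Granting that, the proof is immediate, and it is the natural companion to \eqref{lemma:reg-class-cut-out-by-stable}.

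If one prefers to avoid the path-category machinery, there is a self-contained alternative using only \eqref{thm:psh-f-free-colimit} and \eqref{prop:have-and-preserve-colims-regular}. Since $\mc{F}\subseteq\ol{\mc{F}}$ always, it suffices to fix $C\in\ol{\mc{F}}$, a diagram $g\colon C\ra A_i$, and show $f_i$ preserves $\colim_C g$. By definition each $f_i$ preserves $\mc{F}$-colimits, so both $f_i\circ\wh{g}$ (where $\wh{g}\colon\Psh^{\mc{F}}(C)\ra A_i$ is the $\mc{F}$-colimit-preserving extension of $g$) and the extension $\wh{f_ig}\colon\Psh^{\mc{F}}(C)\ra B_i$ are $\mc{F}$-colimit-preserving lifts of $f_ig$, hence agree by the uniqueness in \eqref{thm:psh-f-free-colimit}. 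Both $\wh{g}$ and $\wh{f_ig}$ preserve $\ol{\mc{F}}$-colimits by \eqref{prop:have-and-preserve-colims-regular}(2), so evaluating the equivalence $f_i\wh{g}\approx\wh{f_ig}$ at the terminal presheaf $1\approx\colim_C\rho_C$, which lies in $\Psh^{\mc{F}}(C)$ because $C\in\ol{\mc{F}}$, identifies $f_i(\colim_C g)$ with $\colim_C(f_ig)$ compatibly with the colimit cocone; that is, $f_i$ preserves the $C$-colimit of $g$. Thus $C\in\mc{F}$, giving $\mc{F}=\ol{\mc{F}}$.
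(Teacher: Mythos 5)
Your main argument is exactly the paper's proof: it invokes the path-category criterion \eqref{prop:path-criterion-for-colimit-preservation} to rewrite the preservation condition as stability of the full subcategories $\Path(f_i)\subseteq\LPath(f_i)$ and then applies \eqref{lemma:reg-class-cut-out-by-stable}, with the cocompleteness of $\LPath(f_i)$ that you rightly flag being supplied by \eqref{lemma:path-category-technical}. Your alternative route is also sound, but it does not genuinely avoid the path-category machinery, since \eqref{prop:have-and-preserve-colims-regular}(2), on which it relies, is itself proved in the paper via that same criterion.
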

\begin{proof}
We again  use the path-category criterion for colimit preservation
\eqref{prop:path-criterion-for-colimit-preservation}, which implies that $\mc{F}$ is  precisely
the class of $C$ such 
that each of the full subcategories $\Path(f_i)\subseteq \LPath(f_i)$ is stable
under $C$-colimits.  The claim is then immediate from  
\eqref{lemma:reg-class-cut-out-by-stable}.
\end{proof}

In the situation of the previous proposition, we will say that the
regular class $\mc{F}$ is \dfn{cut out} by the collection of
embeddings $\{A_i'\subseteq A_i\}$.  It is easy to see that every
regular class $\mc{F}$ arises in this way: it is cut out by
$\{\Psh^{\mc{F}}(C) \subseteq \Psh(C)\}_{C\in \icat}$, since if
$\Psh^{\mc{F}}(C)\subseteq \Psh(C)$ is stable under $C$-colimits then
$1\in \Psh^{\mc{F}}(C)$.   We have the following immediate consequence.

\begin{cor}\label{cor:intersection-reg-classes}
The intersection of any collection of regular classes is a regular class.
\end{cor}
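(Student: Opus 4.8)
The plan is to exploit the observation made just above the corollary: every regular class is \emph{cut out} by a collection of full-subcategory embeddings into cocomplete categories. Concretely, a regular class $\mc{K}$ is cut out by the collection $\{\Psh^{\mc{K}}(C)\subseteq \Psh(C)\}_{C\in \icat}$, and each $\Psh(C)$ is cocomplete. So, given an arbitrary family $\{\mc{K}_\alpha\}_{\alpha}$ of regular classes, the first step is to assemble the single combined collection of embeddings $\{\Psh^{\mc{K}_\alpha}(C)\subseteq \Psh(C)\}_{(\alpha,C)}$, indexed by pairs $(\alpha,C)$. Each member is a full subcategory of a cocomplete category, so \eqref{lemma:reg-class-cut-out-by-stable} applies directly to the class $\mc{F}$ of all $J\in \icat$ for which every member of this combined collection is stable under $J$-colimits, and tells us that $\mc{F}$ is a regular class.

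The second step is to identify $\mc{F}$ with the intersection $\bigcap_\alpha \mc{K}_\alpha$. Unwinding the definition, $J\in \mc{F}$ means that for every $\alpha$, the embedding $\Psh^{\mc{K}_\alpha}(C)\subseteq \Psh(C)$ is stable under $J$-colimits for all $C\in \icat$. By the characterization of regular closure in terms of stability of free colimit completions \eqref{prop:stability-of-free-colim-completion}, this says precisely that $J\in \ol{\mc{K}_\alpha}=\mc{K}_\alpha$ for every $\alpha$, i.e.\ $J\in \bigcap_\alpha \mc{K}_\alpha$. Hence $\mc{F}=\bigcap_\alpha \mc{K}_\alpha$ is a regular class.

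I expect there to be no genuine obstacle here; the content is entirely bookkeeping. The only points requiring any care are the correct double-indexing of the combined collection by pairs $(\alpha,C)$ and the invocation of \eqref{prop:stability-of-free-colim-completion} to translate ``stable under $J$-colimits for all $C$'' into ``membership in $\mc{K}_\alpha$''. The cocompleteness hypothesis needed by \eqref{lemma:reg-class-cut-out-by-stable} is automatic since the ambient categories are presheaf categories.

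As an even more direct alternative that sidesteps the cut-out machinery, one can argue by monotonicity of free colimit completion. Writing $\mc{K}=\bigcap_\alpha \mc{K}_\alpha$, the inclusion $\mc{K}\subseteq \mc{K}_\alpha$ forces $\Psh^{\mc{K}}(C)\subseteq \Psh^{\mc{K}_\alpha}(C)$ for every $C$, since the right-hand side already contains the representables and is stable under $\mc{K}_\alpha$-colimits, hence under $\mc{K}$-colimits. Thus if $1\in \Psh^{\mc{K}}(C)$ then $1\in \Psh^{\mc{K}_\alpha}(C)$, whence $C\in \ol{\mc{K}_\alpha}=\mc{K}_\alpha$ for every $\alpha$ and so $C\in \mc{K}$. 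This establishes $\ol{\mc{K}}\subseteq \mc{K}$, and the reverse inclusion $\mc{K}\subseteq \ol{\mc{K}}$ is automatic, so $\mc{K}$ is regular.
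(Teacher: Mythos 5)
Your first argument is exactly the paper's intended proof: the corollary is stated there as an ``immediate consequence'' of the observation that every regular class $\mc{K}$ is cut out by $\{\Psh^{\mc{K}}(C)\subseteq \Psh(C)\}_{C\in\icat}$, combined with \eqref{lemma:reg-class-cut-out-by-stable} applied to the union of these collections, and your identification of the resulting cut-out class with the intersection via \eqref{prop:stability-of-free-colim-completion} is correct. Your alternative monotonicity argument is also valid and even more elementary, but the main proposal matches the paper's route, so there is nothing to fix.
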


\subsection{Filtration and cofiltration classes}
\label{subsec:filtration-and-cofiltration}

Examples of the above construction are the \emph{filtration} and
\emph{cofiltration} classes.

Given a class $\mc{U}\subseteq \icat$ of small $\infty$-categories, we
define its associated \dfn{filtration class} to be 
\[
\Filt(\mc{U})=\set{J\in \icat}{\text{$\llim_{U^\op}\colon \Fun(U^\op,
    \igpd)\ra \igpd$ preserves $J$-colimits for all $U\in \mc{U}$}}. 
\]
Likewise, given a class $\mc{J}\subseteq \icat$ of small
$\infty$-categories, we define its associated \dfn{cofiltration class}
to be
\[
\coFilt(\mc{J})=\set{U\in \icat}{\text{$\colim_{J}\colon
    \Fun(J,\igpd)\ra \igpd$ preserves $U^\op$-limits for all $J\in
    \mc{J}$}}. 
\]
Clearly both filtration classes and cofiltration classes are examples
of regular classes.  Note further that
\[
\mc{U}\subseteq \mc{V} \quad \Longrightarrow \quad
\Filt(\mc{U})\supseteq \Filt(\mc{V}), \quad
\coFilt(\mc{U})\supseteq\coFilt(\mc{V}), 
\]
and
\[
\mc{U}\subseteq \coFilt(\mc{J}) \quad \Longleftrightarrow \quad
\mc{J}\subseteq \Filt(\mc{U}),
\]
that is, $\Filt$ and $\coFilt$ define a ``Galois connection'' on the
collection of regular classes.

Filtration classes include the classes of $\kappa$-filtered
categories, sifted categories, and others: see examples
\eqref{subsec:idempotent-completion}--\eqref{subsec:weakly-contractible-cats} below.

\section{A recognition principle for free colimit completion}
\label{sec:recognition-principle}

Let $\mc{F}\subseteq \icat$ be a class of small $\infty$-categories,
and $A$ an $\infty$-category which has $\mc{F}$-colimits.  Say that an
object $a$ of $A$ is \dfn{$\mc{F}$-compact} if
\[
\Map_A(a,-)\colon A\ra \igpd
\]
preserves all $\mc{F}$-colimits.   I write
$A^{\mc{F}\mr{-cpt}}\subseteq A$ for the full subcategory of
$\mc{F}$-compact objects.

The notion of $\mc{F}$-compactness really only depends on the
regular closure of $\mc{F}$.
\begin{prop}\label{prop:f-compact-regular}
  Let $A$ be an $\infty$-category which has $\mc{F}$-colimits.  Then
  an object of $A$ is $\mc{F}$-compact if and only if it is
  $\ol{\mc{F}}$-compact.  
\end{prop}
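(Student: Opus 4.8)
The plan is to deduce the statement from the preservation result already established, namely \eqref{prop:have-and-preserve-colims-regular}. The key observation is that $\mc{F}$-compactness of an object $a \in A$ is, by definition, a statement about whether the single functor $\Map_A(a,-) \colon A \ra \igpd$ preserves $\mc{F}$-colimits. So I want to invoke part (2) of \eqref{prop:have-and-preserve-colims-regular} with the functor $f = \Map_A(a,-)$.

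First I would check the hypotheses of that proposition. It requires $f \colon A \ra B$ to be a functor between categories which both have $\mc{F}$-colimits. Here $A$ has $\mc{F}$-colimits by assumption, and $B = \igpd$ is cocomplete, hence certainly has $\mc{F}$-colimits. Then part (2) says: if $f$ preserves $\mc{F}$-colimits, then $f$ preserves $\ol{\mc{F}}$-colimits. Applying this to $f = \Map_A(a,-)$ gives exactly the forward implication: if $a$ is $\mc{F}$-compact, then $\Map_A(a,-)$ preserves $\ol{\mc{F}}$-colimits, i.e.\ $a$ is $\ol{\mc{F}}$-compact.

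The reverse implication is the easy direction and requires no machinery: since $\mc{F} \subseteq \ol{\mc{F}}$ (noted in \S\ref{sec:regular-closure}), any functor that preserves all $\ol{\mc{F}}$-colimits in particular preserves all $\mc{F}$-colimits. Hence an $\ol{\mc{F}}$-compact object is automatically $\mc{F}$-compact. Combining the two directions yields the stated equivalence.

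I do not expect any genuine obstacle here, since the real content has been front-loaded into \eqref{prop:have-and-preserve-colims-regular}; this proposition is essentially a corollary, just the specialization of the general preservation statement to the particular corepresentable functors $\Map_A(a,-)$. The only point deserving a sentence of care is confirming that $\igpd$ has $\mc{F}$-colimits so that the cited proposition genuinely applies — but this is immediate from cocompleteness of $\igpd$ \eqref{subsec:presheaves}. The entire proof is therefore a two-line application of an earlier result together with the trivial containment $\mc{F} \subseteq \ol{\mc{F}}$.
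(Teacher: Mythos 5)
Your proposal is correct and matches the paper's proof, which consists of the single line ``Apply \eqref{prop:have-and-preserve-colims-regular} to $A$ and to $\Map_A(a,-)$''; you have simply spelled out the details (the hypothesis check for $\igpd$, and the trivial converse via $\mc{F}\subseteq\ol{\mc{F}}$) that the paper leaves implicit. The only nuance worth noting is that applying part (1) of \eqref{prop:have-and-preserve-colims-regular} to $A$ itself is also needed, so that $A$ has $\ol{\mc{F}}$-colimits and $\ol{\mc{F}}$-compactness is meaningful in the first place.
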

\begin{proof}
  Apply   \eqref{prop:have-and-preserve-colims-regular} to $A$ and to 
  $\Map_A(a,-)$.   
\end{proof}
Thus, if $a$ is $\mc{F}$-compact, then $\Map_A(a,-)$ preserves
$\ol{\mc{F}}$-colimits.

We obtain the following recognition principle.\footnote{This general
  principle was already known to Jacob Lurie (personal communication).}
\begin{prop}\label{prop:recognition-free-colimit-completion}
  Let $\mc{F}\subseteq \icat$ be a class of small
  $\infty$-categories, and suppose $C\in\icat$. 
  Let $\wh{f}\colon \PSh^{\mc{F}}(C)\ra A$ be an $\mc{F}$-colimit
  preserving  functor to an  $\infty$-category which has
  $\mc{F}$-colimits, and let $f=\wh{f}\rho_C\colon C\ra A$.
  \begin{enumerate}
  \item If $f$ is fully-faithful and $f(C)\subseteq
    A^{\mc{F}\mr{-cpt}}$, then $\wh{f}$ is fully
    faithful.
  \item
    The functor $\wh{f}$ is an equivalence if and only if
    \begin{enumerate}
    \item [(i)]$f$ is fully faithful.
    \item [(ii)] $f(C)\subseteq A^{\mc{F}\mr{-cpt}}$.
      \item [(iii)] The objects of $f(C)$ generate $A$ under $\mc{F}$-colimits.
      \end{enumerate}
  \end{enumerate}
\end{prop}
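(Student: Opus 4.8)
The plan is to prove this recognition principle by reducing it to the free colimit completion of $\Psh^{\mc{F}}(C)$ itself, exploiting the universal property and the behavior of mapping spaces out of compact objects. Let me sketch the approach.

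First I would set up part (1). The key observation is that $\wh f$ factors through the free colimit completion in a way that lets me compute mapping spaces. Since $f(C) \subseteq A^{\mc{F}\text{-cpt}}$, for each object $c$ of $C$ the functor $\Map_A(f(c),-)\colon A \ra \igpd$ preserves $\mc{F}$-colimits. I want to show $\wh f$ is fully faithful, i.e. that the map
\[
\Map_{\Psh^{\mc{F}}(C)}(X,Y) \ra \Map_A(\wh f(X), \wh f(Y))
\]
is an equivalence for all $X,Y$. The natural strategy is to fix $Y$ and show the class of $X$ for which this holds is stable under $\mc{F}$-colimits (hence all of $\Psh^{\mc{F}}(C)$ once it contains the representables). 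Both sides, as functors of $X^\op$, send $\mc{F}$-colimits in $X$ to $\mc{F}$-limits in $\igpd$; the left side does so because $\Psh^{\mc{F}}(C) \hookrightarrow \Psh(C)$ is stable and Yoneda computes mapping spaces, while the right side does so precisely because $\wh f$ preserves $\mc{F}$-colimits and $\wh f(X)$ ranges over objects whose corepresented functors I need to control. This is where the compactness hypothesis enters: I reduce to $X = \rho_C(c)$ representable, where the map becomes $\Map(\rho c, Y) \ra \Map_A(f(c), \wh f(Y))$, and then run a \emph{second} stability argument in $Y$. For $Y = \rho_C(c')$ representable this is just full faithfulness of $f$, which is hypothesis (i); and the subcategory of $Y$ on which it holds is stable under $\mc{F}$-colimits exactly because $f(c)$ is $\mc{F}$-compact, so $\Map_A(f(c),-)$ preserves the relevant colimits while $\Map(\rho c, -)$ does too (representables are compact in $\Psh^{\mc{F}}(C)$, using \eqref{prop:f-compact-regular} and \eqref{prop:have-and-preserve-colims-regular}).

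For part (2), the ``only if'' direction is essentially formal: an equivalence is fully faithful (giving (i)), equivalences preserve and reflect compactness so $A^{\mc{F}\text{-cpt}}$ corresponds to the compact objects of $\Psh^{\mc{F}}(C)$, among which the representables sit (giving (ii)), and an equivalence carries the generating property of the representables in $\Psh^{\mc{F}}(C)$ over to $f(C)$ in $A$ (giving (iii)). For the ``if'' direction, by part (1) the functor $\wh f$ is already fully faithful, so it remains to show it is essentially surjective. Here I would use hypothesis (iii): the essential image of $\wh f$ is a full subcategory of $A$ containing $f(C)$, and since $\wh f$ preserves $\mc{F}$-colimits, its essential image is stable under $\mc{F}$-colimits in $A$. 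By the definition of ``generated under $\mc{F}$-colimits,'' the smallest such subcategory containing $f(C)$ is all of $A$, so $\wh f$ is essentially surjective, hence an equivalence.

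The main obstacle I anticipate is the stability-in-$Y$ step of part (1): verifying that the subcategory of $Y$ for which $\Map_{\Psh^{\mc{F}}(C)}(\rho c, Y) \ra \Map_A(f(c), \wh f(Y))$ is an equivalence is genuinely stable under $\mc{F}$-colimits. The subtlety is that stability of a full subcategory under $\mc{F}$-colimits requires the colimit computed in the ambient category to land back inside, and here I must match up the $\mc{F}$-colimit preserved by $\Map(\rho c,-)$ on the source side with the one preserved by $\Map_A(f(c),-)$ on the target side \emph{compatibly with the comparison map}. This amounts to checking that the comparison transformation itself is natural enough to commute with passing to $\mc{F}$-colimits, which should follow from naturality of the unit of the adjunction defining $\wh f$, but deserves care. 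One clean way to organize all of this is to invoke \eqref{prop:have-and-preserve-colims-regular} to upgrade everything from $\mc{F}$ to $\ol{\mc{F}}$, and then note that both $\Map_{\Psh^{\mc{F}}(C)}(\rho c, -)$ and $\Map_A(f(c),-) \circ \wh f$ are $\ol{\mc{F}}$-colimit preserving functors $\Psh^{\mc{F}}(C) \ra \igpd$ agreeing on representables, so that the comparison is an equivalence by the universal property \eqref{thm:psh-f-free-colimit} applied with target $A = \igpd$ — sidestepping the hand-crafted stability argument entirely.
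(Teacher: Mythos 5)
Your argument is correct, but it takes a genuinely different route from the paper's. The paper's proof first replaces $\mc{F}$ by its regular closure $\ol{\mc{F}}$ (using \eqref{prop:have-and-preserve-colims-regular} and \eqref{prop:f-compact-regular}) and then invokes the explicit description \eqref{prop:regular-criterion-for-colim-closure}, so that every $X\in\Psh^{\mc{F}}(C)$ is tautologically a \emph{single} $\ol{\mc{F}}$-colimit of representables indexed by $C/X\in\ol{\mc{F}}$; at that point Lurie's proof of \cite{lurie-higher-topos}*{5.3.5.11}, which computes both mapping spaces by writing $X$ and $Y$ as filtered colimits of representables, transfers verbatim. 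You never use such presentations: you instead run a generation argument straight off the definition of $\Psh^{\mc{F}}(C)$ as the smallest full subcategory containing the representables and stable under $\mc{F}$-colimits (fix $Y$ and reduce in $X$, which needs no compactness since both sides carry $\mc{F}$-colimits in $X$ to limits; then for representable $X$ reduce in $Y$), and your ``clean way'' of handling the second reduction --- noting that $\Map_{\Psh^{\mc{F}}(C)}(\rho_C c,-)$ and $\Map_A(f(c),\wh{f}(-))$ are both $\mc{F}$-colimit preserving, and that restriction along $\rho_C$ is an equivalence, hence conservative, on $\mc{F}$-colimit-preserving functors by \eqref{thm:psh-f-free-colimit} with target $\igpd$ --- does indeed dispose of the naturality bookkeeping you flagged as the main obstacle. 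What each approach buys: the paper's reduction is short and showcases precisely why the regular-closure machinery makes the $\Ind$-recognition argument generalize, but it asks the reader to re-run Lurie's proof; yours is self-contained, and notably shows that this proposition needs no regular closures at all --- your closing invocation of $\ol{\mc{F}}$ is superfluous, since $A$ is only assumed to have $\mc{F}$-colimits and every step already works for $\mc{F}$ itself. Two small repairs to your write-up: the fact that $\Map_{\Psh^{\mc{F}}(C)}(\rho_C c,-)$ preserves $\mc{F}$-colimits follows from complete compactness of representables in $\Psh(C)$ \cite{lurie-higher-topos}*{5.1.6.2} together with stability of $\Psh^{\mc{F}}(C)\subseteq\Psh(C)$, not from \eqref{prop:f-compact-regular} or \eqref{prop:have-and-preserve-colims-regular}, which only assert that compactness is insensitive to regular closure; and in the essential-surjectivity step you should say explicitly that stability of the essential image of $\wh{f}$ under $\mc{F}$-colimits uses the full faithfulness already established in part (1), in order to lift an $\mc{F}$-diagram in the image to a diagram in $\Psh^{\mc{F}}(C)$ whose colimit $\wh{f}$ then preserves.
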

\begin{proof}
  Without loss of generality we can replace  $\mc{F}$ with
  $\ol{\mc{F}}$, using 
\eqref{prop:have-and-preserve-colims-regular} and
  \eqref{prop:f-compact-regular}.  Furthermore, we 
  know that $\Psh^{\mc{F}}(C)=\set{X\in
    \Psh(C)}{C/X\in \ol{\mc{F}}}$
  \eqref{prop:regular-criterion-for-colim-closure}, and that every $X$ is
  tautologically a $C/X$-colimit of representable presheaves.
  
  Then this is proved exactly as in \cite{lurie-higher-topos}*{5.3.5.11},
  which deals with the special case where $\mc{F}$ is the class of
  $\kappa$-filtered $\infty$-categories for some regular cardinal $\kappa$.
\end{proof}

\begin{rem}
  Note that the original example $C\xra{\rho} \Psh^{\mc{F}}(C)\subseteq \Psh(C)$
  of a free $\mc{F}$-colimit completion is exactly of this type, since
  all representable presheaves are ``completely compact''
  \cite{lurie-higher-topos}*{5.1.6.2}.   In the case of $\mc{F}=\icat$
  this  recovers \cite{lurie-higher-topos}*{5.1.6.11}.
\end{rem}

\section{Examples of regular classes and regular closures}
\label{sec:examples}

\subsection{The minimal regular class}
\label{subsec:minimal-regular-class}

Since $\Psh^{\varnothing}(C)\approx C$, we have that
\[
\ol{\varnothing} = \set{C\in \icat}{\text{$C$ has a terminal object}}.
\]
This is a (very) trivial example of a regular closure which is not of cofinal
generation (\S\ref{sec:cofinality}).  

\subsection{The maximal regular class}

Clearly $\icat$ is a regular class, and $\Psh^{\icat}(C)=\Psh(C)$.
The $\icat$-compact objects of a cocomplete $\infty$-category are
precisely what are called \emph{completely compact} in
\cite{lurie-higher-topos}*{5.1.6.2}.

There are a number of well-known identifications of $\icat$ as a
regular closure.  For instance, it is the regular closure of  $\Set
\cup \{ \Lambda^2_1\}$ (i.e., coproducts and pushouts, see
\cite{lurie-higher-topos}*{4.4.2.6}).  It is the cofinal closure of
the class of small 1-categories \cite{lurie-higher-topos}*{4.2.3.14}, and in
fact the cofinal closure of the class of small posets
\cite{lurie-higher-topos}*{4.2.3.15}.

\subsection{Coproducts}

Let $\Set\subseteq \icat$ be the collection of all small and discrete
$\infty$-groupoids, so that $\Psh^\Set(C)$ is the free completion of
$C$ with respect to small coproducts.  It is straightforward to show
that $\Psh^\Set(C)$ consists exactly of presheaves which are
equivalent to small coproducts of representables, as this subcategory is
itself clearly stable under coproducts.  Thus $1\in \Psh^\Set(C)$
implies $1\approx \coprod_i \rho(c)$, and using this you can show that 
\[
\ol{\Set} = \set{\coprod_i C_i}{\text{each $C_i\in \icat$ has a
    terminal object}}.  
\]
In particular, $\ol\Set$ is cofinally generated by $\Set$.

Analogous considerations identify the regular
closure of $\kappa$-small sets, where $\kappa$ is any infinite 
cardinal, though the precise description depends on whether $\kappa$ is a regular cardinal.
In the following, I write $\kappa^+$ for the successor cardinal of
$\kappa$, and $\Set^{<\kappa}\subseteq\Set$ for the class of  small
and discrete $\infty$-groupoids with fewer than $\kappa$ path-components.

\begin{prop}\label{prop:reg-and-irreg-cardinals}
  Let $\kappa$ be an infinite cardinal.
  \begin{enumerate}
  \item If $\kappa$ is regular, then $\ol{\Set^{<\kappa}}$ is cofinally
      generated by $\Set^{<\kappa}$, and is not equal to 
      $\ol{\Set^{<\kappa^+}}$.
    \item If $\kappa$ is irregular, then
      $\ol{\Set^{<\kappa}}=\ol{\Set^{<\kappa^+}}$, and is not
      cofinally generated by $\Set^{<\kappa}$.
  \end{enumerate}
\end{prop}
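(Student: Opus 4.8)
The plan is to first pin down both the free completion and the regular closure explicitly, and then treat the two cases. The key structural input, valid for any \emph{regular} cardinal $\lambda$, is that $\Psh^{\Set^{<\lambda}}(C)$ is exactly the full subcategory of presheaves equivalent to $\coprod_{i\in I}\rho(c_i)$ with $|I|<\lambda$: this subcategory contains the representables and, because a $<\lambda$-small sum of $<\lambda$-small sets is again $<\lambda$-small when $\lambda$ is regular, it is stable under $\Set^{<\lambda}$-colimits, hence equals the free completion. Analyzing when the terminal presheaf has this form exactly as in the discussion of coproducts above (the fibers of $1\approx\coprod_i\rho(c_i)$ force the $c_i$ to be terminal objects of the components they dominate), I would record that $C\in\ol{\Set^{<\lambda}}$ if and only if $C\approx\coprod_{i\in I}C_i$ with $|I|<\lambda$ and each $C_i$ having a terminal object. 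I will apply this with $\lambda=\kappa$ (when $\kappa$ is regular) and with $\lambda=\kappa^+$ (always, since successor cardinals are regular).

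Granting this, the regular case (1) is short. For cofinal generation, given $D\approx\coprod_{i\in I}D_i\in\ol{\Set^{<\kappa}}$ with $|I|<\kappa$ and $t_i\in D_i$ terminal, I take the discrete category $I\in\Set^{<\kappa}$ and the functor $f\colon I\to D$, $i\mapsto t_i$; since $\coprod_i\rho_D(t_i)\approx 1$ (each $\rho_D(t_i)$ is terminal on $D_i$ and empty elsewhere), $f$ is cofinal by \eqref{lemma:cofinal-via-univ-colimit}, so $\Set^{<\kappa}$ cofinally generates $\ol{\Set^{<\kappa}}$. For the strict inequality, the discrete set $S$ of cardinality $\kappa$ lies in $\Set^{<\kappa^+}\subseteq\ol{\Set^{<\kappa^+}}$, but any coproduct decomposition of a discrete category into pieces with terminal objects uses only singletons, so $S\notin\ol{\Set^{<\kappa}}$; hence $\ol{\Set^{<\kappa}}\neq\ol{\Set^{<\kappa^+}}$.

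The heart of the argument is the singular case (2), and I expect the main obstacle to be the inclusion $\ol{\Set^{<\kappa^+}}\subseteq\ol{\Set^{<\kappa}}$. Since $\ol{\Set^{<\kappa}}$ is a regular class, it suffices to check $\Set^{<\kappa^+}\subseteq\ol{\Set^{<\kappa}}$, i.e. that the discrete set $S$ with $|S|=\kappa$ satisfies $1\in\Psh^{\Set^{<\kappa}}(S)$. This is exactly where singularity enters: writing $\kappa=\sum_{j<\lambda}\kappa_j$ with $\lambda=\operatorname{cf}(\kappa)<\kappa$ and each $\kappa_j<\kappa$, I partition $S=\coprod_{j<\lambda}S_j$ with $|S_j|=\kappa_j<\kappa$ and set $Y_j\defeq\coprod_{s\in S_j}\rho(s)$, a $<\kappa$-small coproduct of representables and so an object of $\Psh^{\Set^{<\kappa}}(S)$. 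Since coproducts in $\Psh(S)\approx\igpd^S$ are computed componentwise, $\coprod_{j<\lambda}Y_j\approx 1$, and as $\lambda<\kappa$ this exhibits $1$ as a $\Set^{<\kappa}$-colimit of objects of $\Psh^{\Set^{<\kappa}}(S)$; thus $1\in\Psh^{\Set^{<\kappa}}(S)$. Combined with the trivial inclusion $\ol{\Set^{<\kappa}}\subseteq\ol{\Set^{<\kappa^+}}$, this gives the claimed equality.

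Finally, to see that $\Set^{<\kappa}$ does not cofinally generate $\ol{\Set^{<\kappa}}$ when $\kappa$ is singular, I use the same $S$ (now in $\ol{\Set^{<\kappa}}$ by the previous step) as the witness. A functor from a discrete $C\in\Set^{<\kappa}$ to the discrete $S$ is a map of sets, and since $S_{s/}\approx\{s\}$, criterion (3) of \eqref{lemma:cofinal-colim-pres} says it is cofinal exactly when every fiber $f^{-1}(s)\approx C\times_S S_{s/}$ is contractible, i.e. a singleton; that forces $f$ to be a bijection and $|C|=\kappa$, contradicting $|C|<\kappa$. Hence no such cofinal functor exists, completing the proof.
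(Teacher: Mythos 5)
Your proof is correct, but it takes a more elementary, self-contained route than the paper does. The paper's own proof is a short deduction from its general groupoid result \eqref{prop:regular-class-infty-gpd}: it identifies $\ol{\Set^{<\kappa}}\cap\igpd$ with $\igpd^{\Set^{<\kappa}}=\Psh^{\Set^{<\kappa}}(1)$, computes that this is $\Set^{<\kappa}$ when $\kappa$ is regular and $\Set^{<\kappa^+}$ when $\kappa$ is singular (the latter because successor cardinals are regular), and then declares the four claims straightforward from the earlier remarks on $\ol{\Set}$. You never invoke \eqref{prop:regular-class-infty-gpd} at all; instead you work directly from the definitions, proving the explicit description of $\Psh^{\Set^{<\lambda}}(C)$ and of $\ol{\Set^{<\lambda}}$ for regular $\lambda$ (including the point the paper leaves implicit, that stability of the subcategory of $<\lambda$-small coproducts of representables needs regularity of $\lambda$), exhibiting the cofinal functor $I\to D$ by hand via \eqref{lemma:cofinal-via-univ-colimit}, and, in the singular case, showing $1\in\Psh^{\Set^{<\kappa}}(S)$ directly in $\Psh(S)$ via the cofinality decomposition $\kappa=\sum_{j<\operatorname{cf}(\kappa)}\kappa_j$. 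That two-stage coproduct trick is the same combinatorial core hidden in the paper's claim that $\igpd^{\Set^{<\kappa}}$ contains a set of size $\kappa$ — you just run it in $\Psh(S)$ rather than in $\Psh(1)=\igpd$ — and your appeal to the ``smallest regular class'' property to convert $\Set^{<\kappa^+}\subseteq\ol{\Set^{<\kappa}}$ into $\ol{\Set^{<\kappa^+}}\subseteq\ol{\Set^{<\kappa}}$ is exactly the step the paper's route needs as well. What the paper's approach buys is brevity and reuse of machinery already established; what yours buys is completeness: you fill in everything the paper waves at with ``analogous considerations'' and ``straightforward,'' including the bijection obstruction (via criterion (3) of \eqref{lemma:cofinal-colim-pres}) showing that no $C\in\Set^{<\kappa}$ maps cofinally to a discrete set of size $\kappa$, which is the cleanest witness for failure of cofinal generation in the singular case. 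One cosmetic note: you use $\lambda$ both for a generic regular cardinal and for $\operatorname{cf}(\kappa)$; rename one of them to avoid a clash.
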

\begin{proof}
  From \eqref{prop:regular-class-infty-gpd} we have that both 
  $\ol{\Set^{<\kappa}}\cap \igpd$ and $\len{\ol{\Set^{<\kappa}}}$ are
  equal to $\igpd^{\Set^{<\kappa}}$, the full
  subcategory of $\infty$-groupoids generated by the point under
  $\Set^{<\kappa}$-colimits.   If $\kappa$ is regular, then this
  subcategory is exactly $\Set^{<\kappa}$.  If
  $\kappa$ is irregular,  this subcategory contains a set of cardinality
  not bounded by $\kappa$.  Since successor cardinals are always
  regular, $\igpd^{\Set^{<\kappa}}$ must in this case be 
  $\Set^{<\kappa^+}$.  The claims of the proposition are now
  straightforward using the above remarks on $\ol{\Set}$.
\end{proof}

\subsection{Binary coproducts}
\label{subsec:binary-coproducts-regular-class}

Let $\mc{F}=\{\Delta^0\amalg \Delta^0\}$, so that $\PSh^{\mc{F}}(C)$ is
the free completion of $C$ with respect to pairwise coproducts.  Then
\[
\ol{\mc{F}}= \set{\coprod_{i\in I} C_i}{\text{$I$ is finite and
    non-empty, and each $C_i\in \icat$ has a terminal object}}. 
\]
Note that $\mc{F}$ does not cofinally generate $\ol{\mc{F}}$.

\subsection{Idempotent completion}
\label{subsec:idempotent-completion}

Let $\Idem$ be the walking idempotent.  Then $\Psh^{\{\Idem\}}(C)$ is
an idempotent completion of $C$ \cite{lurie-higher-topos}*{5.3.6.9}.
Thus
\[
\ol{\{\Idem\}} = \set{C\in \icat}{\text{the idempotent completion of
    $C$ has a terminal object}}. 
\]
Then $\ol{\{\Idem\}}=\Filt(\icat)$, i.e., 
$J$-colimits of $\infty$-groupoids preserve all small limits if and
only if $\Psh^{\{\Idem\}}(J)$ has a terminal object. 
(It is easy to see that $\ol{\{\Idem\}}\subseteq \Filt(\icat)$.  For
the converse, note that $\colim_J\colon\Fun(J,\igpd)\ra \igpd$ is
accessible, so is corepresented by some $A\in \Fun(J,\igpd)$
\cite{lurie-higher-topos}*{5.5.2.7}.  That $J\in \Filt(\icat)$ means
that $A$ is completely compact and so is a retract of some
corepresentable functor $\Map_J(j,-)\colon J\ra \igpd$
\cite{lurie-higher-topos}*{5.1.6.8}. 
Finally, $\Map_{\Fun(J,\igpd)}(A,\Map_J(j,-))\approx \colim_J
\Map_J(j,-)\approx *$, whence $A$ corresponds to an initial object of
the idempotent completion of $J^\op$.)

\subsection{$\kappa$-filtered $\infty$-categories}

Given a regular cardinal $\kappa$, let $\Sm_\kappa$ denote the class
of $\kappa$-small $\infty$-categories, i.e., ones which are equivalent
to a $\kappa$-small simplicial set.  Consider the filtering class
$\Filt(\Sm_\kappa)$, which consists of all $J\in \icat$ such that
$J$-colimits of $\infty$-groupoids preserve finite limits.

Then 
$\Filt(\Sm_\kappa)$ is precisely the collection of all small
\emph{$\kappa$-filtered} $\infty$-categories
\cite{lurie-higher-topos}*{5.3.1.7}, i.e., those $J$ such that $K\ra
J$ extends over $K\subseteq K^\rhd$ for every $\kappa$-small
simplicial set $K$.  (See \cite{lurie-higher-topos}*{5.3.3.3} for a proof.)

In particular, \eqref{prop:regular-criterion-for-colim-closure} says
that $\Psh^{\Filt(\Sm_\kappa)}(C)=\Ind_\kappa(C)$, where the latter
is as in \cite{lurie-higher-topos}*{5.3.5}.
Objects are $\Filt(\Sm_\kappa)$ compact if and
only if they are $\kappa$-compact in the usual sense
\cite{lurie-higher-topos}*{5.3.4}.
Thus, our recognition principle
\eqref{prop:recognition-free-colimit-completion} for free 
$\kappa$-filtered-colimit completion recovers Lurie's
\cite{lurie-higher-topos}*{5.3.5.11}.

The class $\Filt(\Sm_\kappa)$ is cofinally generated by the class of
$\kappa$-directed sets \cite{lurie-higher-topos}*{5.3.1.18}. 

\subsection{Sifted $\infty$-categories}

Let $\Set^{<\omega}\subseteq \icat$ denote the class of finite
discrete $\infty$-groupoids.  Consider the filtering class
$\Filt(\Set^{<\omega})$,  which consists of all $J\in \icat$ such
that $J$-colimits of $\infty$-groupoids preserve finite products.

Then $\Filt(\Set^{<\omega})$ 
is precisely the collection of  all small
\emph{sifted} $\infty$-categories \cite{lurie-higher-topos}*{5.5.8.1},
i.e., those $J$ such that (i) $J$ is non-empty, and (ii) the diagonal
$\delta\colon J\ra J\times J$ is cofinal.  (This equivalence is
well-known.  The main part of the proof is
\cite{lurie-higher-topos}*{5.5.8.11-12}.)

Sifted colimit completion of $C$ is studied in
\cite{lurie-higher-topos}*{5.5.8},  in the special case when $C$
itself is assumed to have finite coproducts.

\subsection{Distilled $\infty$-categories}
\label{subsec:distilled}

Consider the filtering class $\Filt(\{\Lambda^2_0\})$, which consists
of all $J\in \icat$ such that $J$-colimits of $\infty$-groupoids
preserve pullbacks.  This is exactly the class of small
\emph{distilled} $\infty$-categories, where we say that $J$
is \dfn{distilled} if for every functor $f\colon 
\Lambda^2_0\ra J$, the slice $J_{f/}$ has contractible weak homotopy
type.  This identification is proved in
\cite{rezk-gen-accessible-infty-cats}. 
Results of that paper show that the class of small distilled
$\infty$-categories is the regular closure of
$\Filt(\Sm_\omega)\cup \igpd$.

\subsection{Weakly contractible $\infty$-categories}
\label{subsec:weakly-contractible-cats}

The filtering class $\Filt(\{\varnothing\})$ consists of $J$ such
that $J$-colimits of $\infty$-groupoids preserve the terminal object,
i.e., such that $ \colim_J *$ is contractible.  These are precisely the weakly
contractible $\infty$-categories.

\subsection{Other examples?}

The problem of determining $\ol{\mc{F}}$ when $\mc{F}$
is not already known to be a regular class is 
unexplored.  In practice, one would often like to determine if
$\ol{\mc{F}}$ is the cofinal closure of $\mc{F}$.  
Here are some 
interesting possibilities to consider.
\begin{itemize}

  \item $\mc{F}=\igpd$, the class of $\infty$-groupoids.  We will
    prove in subsequent work that $\ol{\igpd}$ consists exactly of
    those $C$ such that the map 
    $u\colon C\ra \len{C}$ to its groupoid completion is a left
    adjoint (or what is the same thing: such that there exists a
    cofinal functor $G\ra C$ from an $\infty$-groupoid).

  \item $\mc{F}=\Sm_\omega$, the class of $\omega$-small
  $\infty$-categories, so that $C\ra \Psh^{\Sm_\omega}(C)$ is a free
  finite-colimit completion of $C$.   It seems plausible that
  $\Sm_\omega$ cofinally generates its regular closure.
\end{itemize}

\section{Appendix: Functors preserving colimits}
\label{sec:functors-preserving-colimits}

In this section I prove a criterion for preservation of colimits by
functors which is surely well-known, but for which I have no
convenient reference.  This proof was suggested to me by Maxime Ramzi.

Given a functor $f\colon A\ra B$ of $\infty$-categories, define the \dfn{oplax
  path category} of $f$ to be
\[
\LPath(f) \defeq (B\times A)\times_{B\times B} \Fun(\Delta^1,B),
\]
so that objects of $\LPath(f)$ correspond to triples
$(b,a,\gamma\colon b\ra f(a))$ 
with $a$ an object of $A$, $b$ an object of $B$, and $\gamma$ a
morphism of $B$.  That is, $\LPath(f)$ is the ``comma category'' of
the pair of functors $B\xra{\id} B \xla{f} A$.  

Write $\Path(f)\subseteq \LPath(f)$ for the
\dfn{path category}, i.e., the full
subcategory spanned by $(b,a,\gamma)$ such that $\gamma$ is an
isomorphism in $B$.  
We write $\pi_A\colon \LPath(f)\ra A$ and $\pi_B\colon
\LPath(f)\ra B$ for the evident projection functors.  Note that the
restriction $\Path(f)\ra A$ of $\pi_A$ is an equivalence.

\begin{prop}\label{prop:path-criterion-for-colimit-preservation}
  Let $J$ be an $\infty$-category, and suppose $f\colon A\ra B$ is a
  functor between $\infty$-categories 
  which have $J$-colimits.  The following are equivalent.
  \begin{enumerate}
  \item The functor $f$ preserves all $J$-colimits.
    \item The full subcategory $\Path(f)\subseteq \LPath(f)$ is stable
      under $J$-colimits.
  \end{enumerate}
\end{prop}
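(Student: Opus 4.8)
The plan is to reduce both implications to a single explicit description of $J$-colimits in $\LPath(f)$. Recall that an object of $\LPath(f)$ is a triple $(b,a,\gamma\colon b\ra f(a))$, that $\pi_A\colon \Path(f)\ra A$ is an equivalence, and that under this equivalence the restriction of $\pi_B$ to $\Path(f)$ is identified with $f$ (since $\gamma$ is an isomorphism on $\Path(f)$, one has $b\simeq f(a)$). The one computation that drives everything is the following: if $p\colon J\ra \LPath(f)$ is a diagram with $p(j)=(b_j,a_j,\gamma_j)$, and if $A,B$ have $J$-colimits, then $p$ has a colimit of the form $(\colim_j b_j,\ \colim_j a_j,\ \bar\gamma)$, where the third component is the canonical comparison composite
\[
\bar\gamma\colon \colim_j b_j \xra{\colim_j \gamma_j} \colim_j f(a_j) \ra f\bigl(\colim_j a_j\bigr).
\]
Granting this, the proposition is nearly immediate, because whether the colimit lands in the full subcategory $\Path(f)$ is exactly the question of whether $\bar\gamma$ is an isomorphism.

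To establish the colimit formula I would argue through the projection $\pi_A$. First I would check that $\pi_A\colon \LPath(f)\ra A$ is a coCartesian fibration: given $(b,a,\gamma)$ and $\alpha\colon a\ra a'$ in $A$, the coCartesian lift leaves $b$ fixed and postcomposes, producing $(b,a',f(\alpha)\circ\gamma)$. Thus the fiber over $a$ is the slice $B_{/f(a)}$, and the pushforward along $\alpha$ is postcomposition with $f(\alpha)$. Each fiber $B_{/f(a)}$ has $J$-colimits, created by the forgetful functor $B_{/f(a)}\ra B$ (since $B$ does), and the pushforward functors preserve them because they do not change underlying objects. By the theory of relative colimits over a coCartesian fibration \cite{lurie-higher-topos}*{4.3.1}, it follows that $\LPath(f)$ has $J$-colimits, that $\pi_A$ preserves them, and that the colimit of $p$ is computed by transporting the whole diagram along the coCartesian cocone maps into the single fiber $B_{/f(\bar a)}$ over $\bar a\defeq\colim_J \pi_A p$ and taking the colimit there. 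Reading this off, the $\pi_B$-component is $\colim_j b_j$ (the pushforwards fix $b$, and slices create colimits), the $\pi_A$-component is $\bar a=\colim_j a_j$, and the structure map is precisely the comparison composite $\bar\gamma$ displayed above.

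With the formula in hand both directions follow. For (1)$\Rightarrow$(2), note $\Path(f)\simeq A$ already has $J$-colimits, and take any $p\colon J\ra \Path(f)$; then each $\gamma_j$ is an isomorphism, so $\colim_j\gamma_j$ is one, and if $f$ preserves $J$-colimits the comparison map $\colim_j f(a_j)\ra f(\colim_j a_j)$ is also one, whence $\bar\gamma$ is an isomorphism and the colimit lies in $\Path(f)$; since $\Path(f)$ is full this exhibits it as stable under $J$-colimits. For (2)$\Rightarrow$(1), given an arbitrary diagram $q\colon J\ra A$, lift it through the equivalence $A\simeq \Path(f)$ to $j\mapsto (f(q(j)),q(j),\id)$; here every $\gamma_j=\id$, so $\bar\gamma$ is exactly the comparison map $\colim_j f(q(j))\ra f(\colim_j q(j))$, and stability of $\Path(f)$ forces this colimit to lie in $\Path(f)$, i.e.\ forces $\bar\gamma$ to be an isomorphism, i.e.\ forces $f$ to preserve the colimit of $q$. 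As every $J$-diagram in $A$ arises this way, $f$ preserves all $J$-colimits.

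The main obstacle is the colimit formula of the second paragraph, and specifically the identification of the third component as the canonical comparison map; everything else is formal. If one prefers to avoid invoking the relative-colimit machinery, the same formula can be verified by hand from the pullback description $\LPath(f)=\Fun(\Delta^1,B)\times_{B\times B}(B\times A)$, by checking that the proposed object corepresents $X'\mapsto \lim_j \Map_{\LPath(f)}(p(j),X')$ via the pullback formula $\Map_{\LPath(f)}((b,a,\gamma),(b',a',\gamma'))\simeq \Map_B(b,b')\times_{\Map_B(b,f(a'))}\Map_A(a,a')$ and commuting the limits past one another; the comparison map appears precisely as the datum needed to make these pullbacks compatible across $j$.
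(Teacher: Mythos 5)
Your proof is correct, and it funnels everything through the same key fact as the paper --- that $J$-colimits in $\LPath(f)$ exist and are detected componentwise --- but you establish that fact by a genuinely different route. The paper proves its Lemma \eqref{lemma:path-category-technical} by realizing $\LPath(f)$ as the $\infty$-category of sections of a Cartesian fibration $X\ra\Delta^1$ (a collage with fibers $X_0\approx B$ and $X_1\approx A$) and quoting the fiberwise computation of colimits in section categories \cite{lurie-higher-topos}*{5.1.2.2}; this is an instance of the principle that colimits in an oplax limit of $\infty$-categories are computed pointwise, it treats $\pi_A$ and $\pi_B$ symmetrically, and it needs no hypotheses on transition functors. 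You instead fiber $\LPath(f)$ over $A$ itself: $\pi_A$ is a coCartesian fibration (it is the pullback along $f$ of evaluation at the target, $\Fun(\Delta^1,B)\ra B$), with fibers the slices $B_{/f(a)}$ and pushforwards given by postcomposition with $f(\alpha)$, and you then invoke the relative-colimit machinery of \cite{lurie-higher-topos}*{4.3.1}. Note that this route genuinely requires your observation that the pushforwards preserve the fiberwise colimits --- the fiberwise existence criterion for relative colimits over a coCartesian fibration demands exactly this --- whereas the paper's Cartesian-over-$\Delta^1$ formulation avoids any such hypothesis because of the direction of its fibration; fortunately the check is trivial here, since colimits in the slices are created by the forgetful functors to $B$. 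What your route buys is explicitness: you identify the third component of the colimit as the canonical composite $\colim_j b_j\ra\colim_j f(a_j)\ra f\bigl(\colim_j a_j\bigr)$, after which the proposition is immediate, since membership of the colimit in the full subcategory $\Path(f)$ is visibly the invertibility of this comparison map; your closing deduction of (1)$\Leftrightarrow$(2) then coincides with the paper's deduction of the Proposition from its Lemma, which runs the same argument without ever naming the comparison map. Your final remark about verifying the formula directly from the mapping-space pullback description is also sound, and is in effect an elementary proof of the unconditional fiberwise statement that the paper gets from the section-category theorem.
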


We will prove this using the following lemma, whose proof we defer to
the end of the section.
\begin{lemma}\label{lemma:path-category-technical}
  Let $K$ be a simplicial set, and let $g\colon K\ra \LPath(f)$ be a
  map.  If $\pi_A g$ and $\pi_B g$ admit colimits in $A$ and $B$
  respectively, then $g$ has a colimit, and both $\pi_A$ and $\pi_B$
  preserve such colimits.
\end{lemma}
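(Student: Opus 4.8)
The plan is to construct the colimit of $g$ explicitly and then read off its two projections. Write $g(k)=(b_k,a_k,\gamma_k\colon b_k\to f(a_k))$, so that $\pi_Bg$ is the diagram $k\mapsto b_k$, $\pi_Ag$ is $k\mapsto a_k$, and the family $\{\gamma_k\}$ assembles into a natural transformation $\gamma\colon \pi_Bg\Rightarrow f\circ\pi_Ag$ of diagrams $K\to B$. Let $b_\infty\defeq\colim_K\pi_Bg$ and $a_\infty\defeq\colim_K\pi_Ag$ be the given colimits, with cocones $\iota_k\colon b_k\to b_\infty$ and $j_k\colon a_k\to a_\infty$. Applying $\colim_K$ to $\gamma$ and composing with the canonical comparison map $\colim_K(f\circ\pi_Ag)\to f(a_\infty)$ produces a morphism $\gamma_\infty\colon b_\infty\to f(a_\infty)$ characterized by the property that $\gamma_\infty\circ\iota_k\simeq f(j_k)\circ\gamma_k$ for every $k$. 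This data is precisely a cocone from $g$ to the object $X_\infty\defeq(b_\infty,a_\infty,\gamma_\infty)$ of $\LPath(f)$, and I claim it is a colimit cocone.

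First I would record the standard description of mapping spaces in the comma category $\LPath(f)$: for objects $X=(b,a,\gamma)$ and $Y=(b',a',\gamma')$ there is a natural pullback square exhibiting
\[
\Map_{\LPath(f)}(X,Y)\;\simeq\;\Map_B(b,b')\times_{\Map_B(b,f(a'))}\Map_A(a,a'),
\]
where the left leg is $\beta\mapsto\gamma'\circ\beta$ and the right leg is $\alpha\mapsto f(\alpha)\circ\gamma$. This is immediate from the defining pullback $\LPath(f)=(B\times A)\times_{B\times B}\Fun(\Delta^1,B)$ together with the description of mapping spaces in $\Fun(\Delta^1,B)$.

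Next I would compute $\Map_{\LPath(f)}(X_\infty,Y)$ for arbitrary $Y$ using this formula. Since mapping out of a colimit turns into a limit, $\Map_B(b_\infty,b')\simeq\lim_k\Map_B(b_k,b')$, and likewise for $\Map_B(b_\infty,f(a'))$ and $\Map_A(a_\infty,a')$; as limits commute with pullbacks, the displayed pullback for $X_\infty$ becomes the limit over $k$ of the analogous pullbacks. The only delicate point is that the two structure legs are compatible with passing to the limit: the left leg restricts over each $k$ to $\beta\mapsto\gamma'\circ\beta$ because the cocone $\iota_k$ is fixed, and the right leg restricts to $\alpha\mapsto f(\alpha)\circ\gamma_k$ precisely because $\gamma_\infty\circ\iota_k\simeq f(j_k)\circ\gamma_k$, so that $f(\alpha)\circ\gamma_\infty\circ\iota_k\simeq f(\alpha\circ j_k)\circ\gamma_k$. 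Hence the inner pullback over $k$ is exactly $\Map_{\LPath(f)}(g(k),Y)$, and one obtains a natural equivalence $\Map_{\LPath(f)}(X_\infty,Y)\simeq\lim_k\Map_{\LPath(f)}(g(k),Y)$, which is the assertion that $X_\infty$ is the colimit of $g$. Since $\pi_A(X_\infty)=a_\infty=\colim_K\pi_Ag$ and $\pi_B(X_\infty)=b_\infty=\colim_K\pi_Bg$, with the cocone projecting to the respective colimit cocones, both $\pi_A$ and $\pi_B$ preserve this colimit; as colimits are unique, this settles the lemma.

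The main obstacle is the compatibility verification above: one must pin down $\gamma_\infty$ carefully enough (through the commuting squares $\gamma_\infty\circ\iota_k\simeq f(j_k)\circ\gamma_k$, equivalently through the colimit-comparison map for $f$) that the right-hand legs of the pullbacks assemble into a map of limit diagrams. An alternative, more structural route avoids this bookkeeping: the projection $\pi_A\colon\LPath(f)\to A$ is a cocartesian fibration whose fiber over $a$ is the slice $B_{/f(a)}$, with cocartesian lift of $\alpha\colon a\to a'$ given by $(b,a,\gamma)\mapsto(b,a',f(\alpha)\circ\gamma)$. One may then transport $g$ into the fiber $B_{/f(a_\infty)}$ along the cocartesian lifts of the cone $\{j_k\}$, take the colimit there (created from $b_\infty$ by the forgetful functor $B_{/f(a_\infty)}\to B$, whose underlying $B$-diagram is exactly $\pi_Bg$), and invoke transitivity of colimits along a cocartesian fibration to conclude that the resulting fiberwise colimit is an absolute colimit of $g$; this simultaneously shows $\pi_A$ and $\pi_B$ preserve it.
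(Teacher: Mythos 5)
Your proposal takes a genuinely different route from the paper, and in outline both of your arguments can be made to work, but each has a real gap as written. For comparison: the paper never computes inside $\LPath(f)$ directly. It realizes $\LPath(f)$ as the $\infty$-category $\Fun_{\Delta^1}(\Delta^1,X)$ of sections of a Cartesian fibration $X\ra\Delta^1$ (an explicit collage with fibers $X_0\approx B$ and $X_1\approx A$), and then quotes \cite{lurie-higher-topos}*{5.1.2.2}, which asserts exactly that colimits of diagrams of sections of a Cartesian fibration exist and are detected fiberwise.

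The gap in your first route is the construction of the cocone, not the mapping-space computation that follows it. In an $\infty$-category you cannot produce a diagram $K^\rhd\ra\LPath(f)$ from the object $X_\infty=(b_\infty,a_\infty,\gamma_\infty)$ together with, separately for each $k$, a homotopy $\gamma_\infty\circ\iota_k\simeq f(j_k)\circ\gamma_k$. A cocone on $g$ amounts to a single coherent map $K^\rhd\times\Delta^1\ra B$ extending $\gamma$, from the colimit cone of $\pi_Bg$ to $f$ applied to the colimit cone of $\pi_Ag$, and objectwise universal properties do not assemble into such data. (Relatedly, $\gamma_\infty$ should be obtained by applying the universal property of $b_\infty$ to the cocone $k\mapsto f(j_k)\circ\gamma_k$; the comparison map you invoke out of $\colim_K(f\circ\pi_Ag)$ is not available, since that colimit need not exist in $B$.) You flag this as the main obstacle, but it is not mere bookkeeping: supplying that coherence is precisely what the fibration machinery cited by the paper accomplishes. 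Granting a coherent cocone, your pullback computation of mapping spaces is correct and does finish the proof.

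Your second route invokes a principle that is false in the generality stated. For a coCartesian fibration $p\colon X\ra S$, pushing a diagram into the fiber over the colimit of the base diagram and taking a fiberwise colimit does \emph{not} in general yield a colimit in $X$: one needs the coCartesian transport functors out of that fiber to preserve the colimit in question. A counterexample is the coCartesian fibration over $\Delta^1$ classified by the constant functor $\Set\ra\Set$ at the one-point set: the coproduct of two objects computed in the fiber over $0$ is not a coproduct in the total category, because mapping out of it into objects over $1$ happens only after applying the (coproduct-destroying) transition functor. In your situation the needed hypothesis does hold: the transports are the postcomposition functors $B_{/f(a_\infty)}\ra B_{/f(a')}$, which preserve all colimits because the projections $B_{/z}\ra B$ create colimits \cite{lurie-higher-topos}*{1.2.13.8}; adding this verification makes your second route a correct proof. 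This point also explains the design of the paper's argument: $\LPath(f)$ is the \emph{oplax limit} of $f$, i.e., the sections of a Cartesian (not coCartesian) fibration over $\Delta^1$, and in that formulation fiberwise computation of colimits requires no preservation hypothesis on the transition functor at all.
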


\begin{proof}[Proof of Corollary from the Lemma]
  (1) $\Longrightarrow$ (2).  Suppose $g\colon J^\rhd\ra \LPath(f)$ is a
  colimit diagram such that $g(J)\subseteq \Path(f)$.  Using
  \eqref{lemma:path-category-technical} we see that  $g$
  corresponds to a triple $(\beta,\alpha,\gamma)$ where $\alpha\colon
  J^\rhd\ra A$ and 
  $\beta\colon J^\rhd \ra B$ are colimit diagrams, and 
  $\gamma\colon \beta\ra f \alpha$ is a natural transformation of
  functors $J^\rhd\ra B$ such that 
  $\gamma|_J$ is a natural isomorphism.  Since by hypothesis $f$
  preserves $J$-colimits, 
  $f\alpha \colon J^\rhd\ra B$ is also colimit diagram, so $\gamma$
  must be an isomorphism, i.e., $g(J^\rhd)\subseteq \Path(f)$.

  (2) $\Longrightarrow$ (1).  Suppose $\alpha\colon J^\rhd\ra A$ is a
  colimit diagram.  Since $B$ has $J$-colimits, we can construct a map
  $g\colon J^\rhd\ra \LPath(f)$ corresponding to the triple
   $(\beta,\alpha,\gamma)$, where $\beta\colon J^\rhd\ra B$ is a
   colimit diagram, and $\gamma\colon \beta\ra f\alpha$ is such that
   $\gamma|_J$ is an isomorphism of functors $J\ra B$.  By
   \eqref{lemma:path-category-technical} the map $g$ is a colimit
   diagram such that $g(J)\subseteq \Path(f)$, and thus by hypothesis
   $g(J^\rhd)\subseteq \Path(f)$, whence $\gamma$ is an isomorphism of
   functors, so $f\alpha$ is a colimit as desired.
 \end{proof}

Now we turn to the proof of the Lemma.  We note that it is a special case of a more
general principle for lax and oplax limits of $\infty$-categories.
Given a functor $\phi\colon C\ra \icat$, we can form the lax limit
$\laxlim_C \phi$, or the analogous oplax limit
$\oplaxlim_C \phi$, as in \cite{gepner-haugseng-nikolaus-lax}.  For
instance, our oplax path category is the oplax 
limit of $\phi\colon \Delta^1\ra \icat$ which represents the diagram
$(f\colon A\ra B)$.   The general principle asserts that limits in
$\laxlim_C\phi$ can be ``constructed and computed pointwise'', and
similarly for colimits in $\oplaxlim_C \phi$.   The first statement is
proved as \cite{linksens-globalizing-global}*{3.9}, while the second
statement is formally dual to it.  Below I'll give a version of this
argument in the special case that we need. 

\begin{proof}[Proof of the Lemma]
  Fix a map of simplicial sets $f\colon A\ra B$.  Then there is a map
  $p\colon X\ra \Delta^1$ of simplicial sets with the following properties.
  \begin{enumerate}
    \item The simplicial set $\Fun_{\Delta^1}(\Delta^1,X)$ of global
      sections is isomorphic to $\LPath(f)$.
      \item This isomorphism restricts to isomorphisms of simplicial
        sets  $X_0\approx B$ and
    $X_1\approx A$, where  $X_s=p^{-1}(s)$ is the fiber of $p$ over
    $s$.
    \item If $A$ and $B$ are quasicategories, then $p$ is a Cartesian fibration.
    \end{enumerate}
This is by an explicit construction, essentially the \emph{weighted nerve}
construction of \cite{kerodon}*{025W} (but adapted to
produce a Cartesian fibration rather than a coCartesian one).
Alternately, it is an example of  the \emph{quasi-categorical collage}
construction of
\cite{riehl-verity-elements-infty-cat}*{F.5.2}, written
$\mathrm{col}(\id_B,f)$ in their notation.
Explicitly, an $n$-cell in $X$ is data $(\sigma, c_A,c_B)$, consisting of
\begin{itemize}
\item a map $\sigma\colon\Delta^n\ra \Delta^1$ of simplicial sets, and
\item a commutative diagram of simplicial sets
  \[\xymatrix{
      {\sigma^{-1}(1)} \ar@{>->}[r] \ar[d]_{c_A}
      & {\Delta^n} \ar[d]^{c_B}
      \\
      {A} \ar[r]_{f}
      & {B}
    } \]
  where $\sigma^{-1}(1)\subseteq \Delta^n$ is the preimage of the final
  vertex of $\Delta^1$.  
\end{itemize}
The identifications (1) and (2) are elementary, while (3) is
essentially 
\cite{kerodon}*{5.3.3.16} or
\cite{riehl-verity-elements-infty-cat}*{F.5.4}, though both of those
references rather express the complementary formulation appropriate for
coCartesian fibrations.

Given a map $g\colon K\ra
\LPath(f)=\Fun_{\Delta^1}(\Delta^1,X)$, write $g_s\colon K\ra X_s$ for
the restriction the fiber over $s\in \{0,1\}$.   Suppose that both 
$g_0$ and $g_1$ admit colimits in their respective fibers.  To prove
the Lemma, we need to show the following.
\begin{enumerate}
\item There exists an extension $\ol{g}\colon K\diamond \Delta^0\ra
  \Map_{\Delta^1}(\Delta^1,X)$ of $g$ which restricts to a colimit
  diagram $\ol{g}_s \colon K\diamond \Delta^0\ra X_s$ in each fiber.
  (Here $K\diamond L$ is the 
  alternate join of \cite{lurie-higher-topos}*{4.2.1}, which in this
  case is just $K\diamond \Delta^0=K\times\Delta^1/K\times \{1\}$.)

\item An extension $\ol{g}\colon K\diamond\Delta^0\ra
  \Map_{\Delta^1}(\Delta^1,X)$ of $g$ is a colimit of $g$ if and only
  if each $\ol{g}_s\colon K\diamond \Delta^0\ra X_s$ is a colimit for $g_s$.
\end{enumerate}
This is precisely the statement of \cite{lurie-higher-topos}*{5.1.2.2}
in the special case that the target of $p$ is $\Delta^1$.
\end{proof}

 \begin{bibdiv}
   \begin{biblist}
\bib{adamek-borceux-lack-rosicky-classification-acc-cat}{article}{
  author={Ad\'{a}mek, Ji\v {r}\'{\i }},
  author={Borceux, Francis},
  author={Lack, Stephen},
  author={Rosick\'{y}, Ji\v {r}\'{\i }},
  title={A classification of accessible categories},
  note={Special volume celebrating the 70th birthday of Professor Max Kelly},
  journal={J. Pure Appl. Algebra},
  volume={175},
  date={2002},
  number={1-3},
  pages={7--30},
  issn={0022-4049},
}

\bib{gepner-haugseng-nikolaus-lax}{article}{
  author={Gepner, David},
  author={Haugseng, Rune},
  author={Nikolaus, Thomas},
  title={Lax colimits and free fibrations in $\infty $-categories},
  journal={Doc. Math.},
  volume={22},
  date={2017},
  pages={1225--1266},
  issn={1431-0635},
}

\bib{linksens-globalizing-global}{article}{
  author={Linksens, Sil},
  title={Globalizing and stabilizing global $\infty $-categories},
  date={2024},
  eprint={arXiv:2401.02264},
}

\bib{lurie-higher-topos}{book}{
  author={Lurie, Jacob},
  title={Higher topos theory},
  series={Annals of Mathematics Studies},
  volume={170},
  publisher={Princeton University Press, Princeton, NJ},
  date={2009},
  pages={xviii+925},
  isbn={978-0-691-14049-0},
  isbn={0-691-14049-9},
}

\bib{kerodon}{webpage}{
  author={Lurie, Jacob},
  title={Kerodon},
  url={https://kerodon.net},
  year={2021},
}

\bib{rezk-gen-accessible-infty-cats}{article}{
  author={Rezk, Charles},
  title={Generalizing accessible $\infty $-categories},
  date={2021},
  eprint={http://www.math.uiuc.edu/~rezk/accessible-cat-thoughts.pdf},
}

\bib{riehl-verity-elements-infty-cat}{book}{
  author={Riehl, Emily},
  author={Verity, Dominic},
  title={Elements of $\infty $-category theory},
  series={Cambridge Studies in Advanced Mathematics},
  volume={194},
  publisher={Cambridge University Press, Cambridge},
  date={2022},
  pages={xix+759},
  isbn={978-1-108-83798-9},
}

\end{biblist}
\end{bibdiv}

\end{document}